\theoremstyle{plain} 
\newtheorem{thm}{Theorem}
\newtheorem{lem}[thm]{Lemma}
\newtheorem{prop}[thm]{Proposition}
\newtheorem{obs}[thm]{Observation}
\newtheorem{conj}[thm]{Conjecture}
\newtheorem*{cl-non-num}{Claim}
\newcommand{\whp}{{\rm whp}}
\newcommand{\C}{\mathcal{C}}
\newcommand{\bC}{\tilde{C}}
\newcommand{\bA}{\tilde{A}}
\DeclareMathOperator{\comp}{comp}
\begin{document}

\title{Linear Extensions and Comparable Pairs in Partial Orders}

\author{Colin McDiarmid}
\address{Department of Statistics, University of Oxford, 
Oxford, UK}
\email{cmcd@stats.ox.ac.uk}

\author{David Penman}
\address{Department of Mathematical Sciences, University of Essex, 
Colchester, UK}
\email{dbpenman@essex.ac.uk}

\author{Vasileios Iliopoulos}
\address{Department of Mathematical Sciences, University of Essex, 
Colchester, UK}
\email{viliop@essex.ac.uk, iliopou@gmail.com}

\subjclass[2010]{06A07}

\keywords{Partial orders, Linear extensions, Comparable pairs, 
Concentration inequalities}

\date{\today}

\begin{abstract}
We study the number of linear extensions of a partial order 
with a given proportion of comparable pairs of elements, and estimate the maximum and minimum possible numbers.
We also consider a random interval partial order on $n$ elements, which has close to 
a third of the pairs comparable with high probability: we show that the number of 
linear extensions is $n! \, 2^{-\Theta(n)}$ with high probability.
\end{abstract}

\maketitle

\section{Introduction} 
\label{sec.intro}

Initially our interest was in the random interval order $P_n$, where we pick $n$ 
intervals independently and uniformly at random in $(0,1)$ (see the 
start of Section~\ref{sec.randint} for a precise definition).
It turns out that with high probability about a third of the $\binom{n}{2}$ 
possible pairs are comparable, and the number $e(P_n)$ of linear extensions 
is $n! \, 2^{-\Theta(n)}$.  But, given the proportion of comparable 
pairs, is this a large number of linear extensions?  
How few or many could there be? The main part of the paper is devoted to answering such questions.  

We investigate how few or many linear extensions a general partially 
ordered set $Q$ or $Q_n$ of $n$ points may have, when it has a 
certain proportion of the possible edges in its comparability graph 
(we recall basic definitions in the next section).
It is well known -- see for example the Proposition at the 
end of~\cite{ehs}, or see~\cite{Tro} -- that the comparability graph determines 
the number of linear extensions $e(Q)$, in the sense that posets
with isomorphic (undirected) comparability graphs have the same number of linear extensions. 

This number $e(Q)$ is an important invariant of a poset which is related 
to, for example, the worst case number $c(Q)$ of pairwise comparisons 
required to determine a (hidden) total order when we are initially given
a partial order compatible with it. It is easy to see by a bisection 
argument that $c(Q) \geq \log_{2} e(Q)$; and there has been much work 
over the years on how close to the truth this is, with one well known result 
being the bound of Fredman~\cite{fred} that $c(Q) \leq \log_{2} e(Q)+2n$, see 
for example~\cite{cardinal} for a recent algorithmic result 
in this direction and references to earlier literature. 

We focus primarily on posets $Q_n$ of $n$ points where the comparability 
graph is dense, with about $\delta \binom{n}{2}$ edges for some 
constant $\delta \in (0, 1)$ -- we refer to such a poset as a dense poset -- but we 
are interested also in what happens when $\delta=\delta(n)$ tends to 0 or 1 as $n \to \infty$.
Given a positive integer $n$ and $0 < \delta < 1$, we let
\[ f^+(n,\delta)= \max \left\{ e(Q_n): \comp(Q_n) \geq \delta \binom{n}{2} \right\} \]
and
\[ f^-(n,\delta)= \min \left\{ e(Q_n): \comp(Q_n) \leq \delta \binom{n}{2} \right\}, \]
where $\comp(Q_n)$ denotes the number of edges in the comparability graph of the poset $Q_n$.
Of course $1 \leq f^-(n,\delta), f^+(n,\delta) \leq n!$.

We are interested in the asymptotic values of functions $f^+(n,\delta)$ and $f^-(n,\delta)$. 
Some steps in the investigation of the precise maximum values $f^+(n,\delta)$ 
were taken in~\cite{fishb}, which concentrated on small numbers of edges  
rather than the comparatively large values we shall mainly examine. 
It is known from~\cite{fishb} that a partial order with the maximum number of 
linear extensions for a given number of vertices and comparable pairs is a semiorder.  
We are not aware of any previous work on the minimum values $f^-(n,\delta)$. 

The main results of our paper are outlined in Theorem~\ref{thm.main} below. 
The main thrust is that, for each fixed $0<\delta<1$, we have $f^+(n,\delta) = n! \, 2^{-\Theta(n)}$ 
and $f^-(n,\delta) = 2^{\Theta(n)}$. Let us spell this out more fully.

\begin{thm}
\label{thm.main}
The maximum values $f^+(n,\delta)$ and minimum values $f^-(n,\delta)$ satisfy the following, as $n \to \infty$.
\begin{enumerate}
\item[(a)]
For each $0<\delta<1$ there are constants $c_1(\delta)$ and $c_2(\delta)$  such that
\[0< c_1(\delta) <  \left( \frac{f^+(n,\delta)}{n!} \right)^{1/n} <  c_2(\delta) <1  
\;\;\; \mbox{ for } n \mbox{ sufficiently large};\]
and further, $c_1(\delta) \to 1$ as $\delta \to 0$, and $c_2(\delta) \to 0$ as $\delta \to 1$.

\item[(b)]
For each $0<\delta<1$ there are constants $c_3(\delta)$ and $c_4(\delta)$ such that 
\[1 < c_3(\delta) <  f^-(n,\delta)^{1/n} <  c_4(\delta) < \infty \;\;\; \mbox{ for } n \mbox{ sufficiently large};\]
and further,  $c_3(\delta) \to \infty$ as $\delta \to 0$, and $c_4(\delta) \to 1$ as $\delta \to 1$.
\end{enumerate}
\end{thm}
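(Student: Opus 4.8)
Each of the four estimates splits into an ``easy'' direction, established by an explicit construction, and a ``hard'' direction, a bound on $e$ valid for \emph{all} posets with the given edge count; moreover each hard direction seems to need two ingredients: a robust one, using a large matching in the comparability (or incomparability) graph together with the fact that the relative orders of pairwise disjoint pairs under a uniform random permutation are mutually independent, which separates the answer from its trivial value for every fixed $\delta\in(0,1)$; and a refined one, using a greedy insertion count or the hook\nobreakdash-length lower bound together with the AM--GM inequality, which captures the correct limiting behaviour as $\delta\to0$ or $\delta\to1$. The two construction directions — the lower bound for $f^+$ and the upper bound for $f^-$ — I would handle by ``layered'' posets (complete multipartite comparability graphs) and by disjoint unrelated chains, respectively.

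\smallskip
\noindent\emph{The bounds on $f^+$ (part (a)).} For the upper bound, first note that by the Erd\H{o}s--Gallai theorem the comparability graph of $Q_n$, having at least $\delta\binom n2$ edges, contains a matching of size $s\ge(1-\sqrt{1-\delta})\,n$; so $Q_n$ has $s$ pairwise disjoint comparable pairs $x_i<y_i$. Since, for a uniform random permutation $\pi$, the events $\{\pi(x_i)<\pi(y_i)\}$ are independent with probability $\tfrac12$ each, $e(Q_n)/n!=\Pr[\pi\text{ is a linear extension}]\le2^{-s}$, so $(f^+(n,\delta)/n!)^{1/n}\le2^{-(1-\sqrt{1-\delta})}<1$. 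This tends only to $\tfrac12$ as $\delta\to1$, so to force $c_2(\delta)\to0$ I would also build a linear extension by inserting the points in a fixed order $v_n,v_{n-1},\dots,v_1$: inserting $v_i$ into a linear extension of the poset already built can be done in at most $1+I_i$ ways, where $I_i$ is the number of already-inserted points incomparable with $v_i$; hence $e(Q_n)\le\prod_i(1+I_i)$, and since $\sum_iI_i=\incomp(Q_n)\le(1-\delta)\binom n2$, AM--GM gives $e(Q_n)\le\bigl(1+\tfrac{(1-\delta)(n-1)}{2}\bigr)^n$, whence $(f^+(n,\delta)/n!)^{1/n}\le\tfrac{e(1-\delta)}{2}+o(1)$, which is $<1$ for $\delta>1-2/e$ and $\to0$ as $\delta\to1$. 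For the lower bound, partition $[n]$ into blocks $B_1,\dots,B_k$ and set $x<y$ iff $x\in B_i$, $y\in B_j$ with $i<j$, so $\comp=\binom n2-\sum_j\binom{|B_j|}{2}$ and $e/n!=\binom{n}{|B_1|,\dots,|B_k|}^{-1}$; taking $k$ roughly equal blocks with $k$ just above $1/(1-\delta)$ gives $\comp\ge\delta\binom n2$ and $(e/n!)^{1/n}\to1/k>0$, while taking only two blocks with $|B_1|\approx\delta n/2$ gives $\comp\ge\delta\binom n2$ and $(e/n!)^{1/n}\to2^{-H(|B_1|/n)}\to1$ as $\delta\to0$, $H$ the binary entropy. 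This supplies $c_1(\delta)$ with the stated properties.

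\smallskip
\noindent\emph{The bounds on $f^-$ (part (b)).} The key lemma for the lower bound is: if $Q$ has $s$ pairwise disjoint incomparable pairs then $e(Q)\ge2^s$. I would prove this by induction on $|V(Q)|$ using $e(Q)=\sum_{m\ \mathrm{maximal}}e(Q-m)$: if some maximal $m$ lies in no pair, delete it (the smaller poset still carries all $s$ pairs); otherwise every maximal element lies in a pair, there are at least two maximal elements $m_1,m_2$, and deleting $m_1$ (resp.\ $m_2$) leaves a poset carrying at least $s-1$ of the pairs, so $e(Q)\ge e(Q-m_1)+e(Q-m_2)\ge2\cdot2^{s-1}$. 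Combined with an Erd\H{o}s--Gallai matching of size $s\ge(1-\sqrt\delta)n$ in the incomparability graph, this yields $f^-(n,\delta)^{1/n}\ge2^{1-\sqrt\delta}>1$ for every fixed $\delta<1$. As this stays below $2$, to get $c_3(\delta)\to\infty$ I would invoke the hook\nobreakdash-length inequality $e(Q)\ge n!/\prod_vh_v$ with $h_v=|\{u:u\le v\}|$ (provable by the same $e(Q)=\sum_m e(Q-m)$ induction, using $\sum_{m\ \mathrm{maximal}}h_m\ge n$); since $\sum_vh_v=n+\comp(Q)\le n+\delta\binom n2$, AM--GM gives $e(Q)\ge n!\,\bigl(1+\tfrac{\delta(n-1)}{2}\bigr)^{-n}$, so $f^-(n,\delta)^{1/n}\ge\tfrac{2}{e\delta}(1-o(1))\to\infty$ as $\delta\to0$. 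For the upper bound I would take a disjoint union of $k$ unrelated chains of sizes $n_1,\dots,n_k$, so $\comp=\sum_j\binom{n_j}{2}$ and $e=\binom{n}{n_1,\dots,n_k}\le k^n$: choosing $k\approx1/\delta$ equal chains makes $\comp\le\delta\binom n2$ with $e^{1/n}\le k(1+o(1))<\infty$, and for $\delta$ near $1$ choosing two chains with $n_2\approx(1-\delta)n/2$ gives $\comp\le\delta\binom n2$ with $e^{1/n}\to2^{H(n_2/n)}\to1$.

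\smallskip
\noindent The step I expect to be the main obstacle is the lower bound on $f^-$: there is no prior work, one first has to recognise that the extremal posets are (roughly) disjoint chains when $\delta$ is near $1$ and near-antichains when $\delta$ is near $0$, and — crucially — neither the hook/AM--GM bound (which decays to the trivial value $1$ as $\delta\to1$) nor the matching bound (which decays to a constant as $\delta\to0$) works by itself, so the two must be combined; finding a clean way to make the matching bound give an \emph{exponential} lower bound (the short deletion induction above) is the delicate point. A secondary difficulty is verifying that the two construction families simultaneously realise the prescribed number of comparable pairs and the extremal (sub)exponential number of linear extensions across the whole range $\delta\in(0,1)$.
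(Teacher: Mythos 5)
Your proposal is correct in substance and reaches all four bounds, but two of the four ``hard'' directions are argued by genuinely different routes from the paper's. The constructions (balanced and two-block antichain examples for the lower bound on $f^+$, balanced and two-block chain examples for the upper bound on $f^-$), the greedy-insertion bound $e(Q_n)\le\prod_v(1+I_v)$ with AM--GM for the refined upper bound on $f^+$, and the hook-length-plus-AM--GM bound for the refined lower bound on $f^-$ all coincide with the paper. Where you diverge: (i) for the robust upper bound on $f^+$ the paper applies a bounded-differences (Azuma-type) inequality to the number of conflicts of a uniform random permutation, obtaining $e(Q_n)\le n!\,2^{-c\delta^2 n}$, whereas you extract a linear-size matching of comparable pairs and use the mutual independence of the relative orders of disjoint pairs to get $e(Q_n)\le n!\,2^{-s}$ with $s=\Omega(\delta n)$ --- an argument that is more elementary and in fact gives a better exponent ($\delta$ in place of $\delta^2$); (ii) for the robust lower bound on $f^-$ the paper uses the level decomposition (height $h<(1+\delta)n/2$, hence $e(Q_n)\ge\prod_i r_i!\ge2^{n-h}\ge2^{(1-\delta)n/2}$), whereas you prove by a deletion induction on $e(Q)=\sum_{m\ \mathrm{max}}e(Q-m)$ that $s$ disjoint incomparable pairs force $e(Q)\ge2^s$ and then find such a matching in the incomparability graph; your induction is sound (in the case where every maximal element lies in a matched pair, a unique maximal element would have to be comparable to everything below it, so there really are two maximal elements to delete), and the lemma $e(Q)\ge 2^{\nu(\overline G)}$ is a nice statement not in the paper. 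The one genuine (though inessential) error is your matching-size estimates: $(1-\sqrt{1-\delta})\,n$ and $(1-\sqrt{\delta})\,n$ exceed $n/2$ for $\delta$ near $1$ (resp.\ near $0$) and so cannot be lower bounds on a matching number; the uncovered-vertices-form-an-independent-set argument you are implicitly using yields $s\ge\tfrac12\bigl(1-\sqrt{1-\delta}\bigr)n$ (and Erd\H{o}s--Gallai yields $s\ge\delta(n-1)/2$), which is all you need, since any $s=\Omega(n)$ suffices for these steps.
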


\subsection*{Plan of the paper}

After some preliminaries in the next section, in Section~\ref{models} we study 
posets consisting of either disjoint chains or disjoint antichains, and derive 
bounds on the numbers of linear extensions and comparable pairs.

In Section~\ref{sec.max} we consider the maximum values $f^+(n,\delta)$.  
We prove part (a) of Theorem~\ref{thm.main}, and give values for $c_1(\delta)$ and $c_2(\delta)$, see~\eqref{eqn.c1def} and~\eqref{eqn.c2def}.  Also, in~\eqref{eqn.vdense} we give a formula for $f^+(n,\delta)$ in the very dense case, and in Proposition~\ref{prop.plusrate} we describe the rate of convergence of $(f^+(n,\delta)/n!)^{1/n}$ 
to $0$ as $\delta=\delta(n) \to 1$.
 
In Section~\ref{sec.min} we similarly consider the minimum values $f^-(n,\delta)$.
We prove part (b) of Theorem~\ref{thm.main}, and give values for $c_3(\delta)$ and $c_4(\delta)$, see~\eqref{eqn.c3def} 
and~\eqref{eqn.c4def}. Also, in~\eqref{eqn.vsparse2} we give a formula for $f^-(n,\delta)$ 
in the very sparse case, and in Proposition~\ref{prop.minusrate} we describe the rate of growth of $f^-(n,\delta)^{1/n}$ as $\delta=\delta(n) \to 0$.

In Section~\ref{sec.randint} we investigate the random interval order $P_n$ (getting the historical 
sequence of events out of order): we show that with high probability $\comp(P_n)$ is about 
$\frac13 \binom{n}{2}$ and $e(P_n)$ is $n! \, 2^{-\Omega(n)}$, 
and thus \whp\ $e(P_n)= n! \, 2^{-\Theta(n)}$ by part (a) of Theorem~\ref{thm.main}. 
Finally, in Section~\ref{sec.concl}, we make a few concluding remarks and conjectures; and in particular we conjecture that
$(f^+(n,\tfrac12)/n!)^{1/n} \to \tfrac12$ and $f^-(n,\tfrac12)^{1/n} \to 2$, as $n \to \infty$.

\section{Preliminaries}
\label{sec.prelims}

A \emph{poset} (partially ordered set) $P$ is a set of points equipped 
with an irreflexive, antisymmetric, and transitive relation $\prec \,$, 
see e.g.~\cite{bri},~\cite{stan}. All posets in this paper are finite. 
Typical notations for a poset will be $P$ or $Q_n$, where $n$ indicates the number of points. 
A \emph{linear extension} of a poset is a total order $<$ on the 
ground set of points such that whenever $x \prec y$ in the partial order, 
then we have $x < y$ in the total order too. The number of linear extensions of a poset $P$ is denoted by $e(P)$.

We say that two points $x \neq y$ are \emph{comparable} in a poset if 
$x \prec y$ or $y \prec x$; otherwise they are \emph{incomparable}.
The \emph{comparability graph} $G$ is the undirected graph with 
vertex set the set of points and an edge between $x$ and $y$ 
if and only if they are comparable: the \emph{incomparability graph} 
is the complement $\overline{G}$ of the comparability graph $G$.  

A {\em chain} in $P$ is a set $T$ of points, any two of which are 
comparable: such a set $T$ can be totally ordered, i.e. enumerated as 
$T=\{t_{1}, t_{2}, \ldots, t_{r}\}$ with $t_{1} \prec t_{2} \prec \ldots \prec t_{r}$. 
The maximum number of points in a chain is called the {\em height} of $P$ 
and is denoted by $h(P)$. By contrast, an {\em antichain} in $P$ is a set of 
points, no two of which are comparable. The maximum number of points in an 
antichain is called the {\em width} of $P$ and is denoted by $w(P)$. 

Recall that the comparability graph of $P$ determines $e(P)$. 
If $P$ and $Q$ are posets on the same set of points,
and the comparability graph of $P$ is a strict subgraph of that of $Q$, 
then clearly $e(P)>e(Q)$ -- see e.g.~\cite{stachowiak}.   
However, the na\"{i}ve intuition that the more edges in the 
comparability graph the fewer linear extensions is inaccurate: for example 
the six-vertex poset consisting of two three-element chains with no comparabilities 
between them has $6$ edges in the comparability graph and $\binom{6}{3}=20$ 
linear extensions (as we shall see), but the six-element poset comprising of 
one maximum element, one minimum element and an antichain of four elements 
all between the maximum and minimum element has $9$ edges in the comparability 
graph and $4!=24$ linear extensions. We thus word our results in terms of 
posets which have at most, or at least, a certain proportion of edges in 
the comparability graph to mitigate these lack of monotonicity issues. 

We shall sometimes use the \emph{level structure} of the poset $P$ on a 
set $V$ of points. The first level $L_{1}$ consists of all 
minimal elements, that is, those points $x\in V$ for 
which $y \preceq x \Rightarrow y=x$. The second level consists 
of the minimal elements of the induced subposet on $V \setminus L_{1}$ 
and generally the $i$th level $L_{i}$ consists of the minimal elements 
of the induced subposet on $V \setminus \cup_{j=1}^{i-1}L_{j}$. 
Note that the number of non-empty levels is $h(P) \leq \vert V \vert$ and that each level is an antichain. 

For positive functions $f(n)$ and $g(n)$ we as usual write $f(n)=O(g(n))$ if there is a constant $K$ such that $f(n) \leq K \cdot g(n)$ for all 
large enough $n$, and similarly $f(n)=\Omega(g(n))$ if $f(n) \geq K \cdot g(n)$ 
for all large enough $n$. If $f(n)=O(g(n))$ and $f(n)=\Omega(g(n))$, then we write 
$f(n)=\Theta(g(n))$. We also use the standard notations $f(n)=o(g(n))$ if $\lim_{n \to \infty} \frac{f(n)}{g(n)}=0$, 
and $f(n) \sim g(n)$ if $f(n)=(1+o(1))g(n)$ as $n \to \infty$. It is a little less standard
to use the notation $f(n)=\omega(g(n))$ when $g(n)=o(f(n))$, that is, when 
$\lim_{n \to \infty} \frac{f(n)}{g(n)}=\infty$. 

When we talk later about properties of a random poset $Q_n$ with $n$ points, we shall say that $Q_n$  
has a given property $\wp$ \emph{with high probability} ({\whp}) if 
\[ 
\mathbb{P}(Q_n \mbox{ has } \wp ) \to 1 \;\; \mbox{ as } \; n \to \infty.
\]
We use Stirling's formula $n! \sim \sqrt{2\pi n}(n/e)^{n}$, and make repeated    
use of the related inequality $n! \geq (n/e)^{n}$ which follows from the 
series expansion of $e^{n}$. Also, we write $[n]$ to denote the set of positive integers $\{1, \ldots, n\}$.

\subsection*{Graphs and entropy}

A subset of the vertices of a graph $G$, such that every two are connected by an edge, is a called a \emph{clique}. 
The \emph{clique number} of $G$ is the cardinality of the largest clique and
the \emph{chromatic number} $\chi(G)$ of $G$ is the minimum number of colours needed 
to colour its vertices in such a way that any two vertices joined by an edge 
receive different colours. $G$ is \emph{perfect} if  
for every induced subgraph of $G$ the chromatic number equals 
the clique number, see for example~\cite{RA-Reed}.  
It is well known that the comparability graph of a poset is perfect, and the complement of a perfect graph is perfect.

Given a graph $G$ on vertex set $[n]$, the \emph{clique polytope} $\C(G)$ of $G$ is defined by
\[
\C(G) = \left \{ \bm {x} \in [0, 1]^{n}: \sum_{i \in C} x_i \leq 1 \text{~for~each~clique~} C \text{~of~} G \right \},
\]
see for example~\cite{Chvatal}. 
Stanley~\cite{Stanley} proved that, if $G$ is the comparability graph of a poset $Q_n$, then 
the number of linear extensions $e(Q_n)$ is equal to the volume of $\C(G)$ multiplied by $n!$.  

The \emph{entropy} $H(G)$ of a graph $G$ is defined by      
\begin{equation*}
H(G)=\min_{\bm{x} \in \C(G)} - \dfrac{1}{n} \sum_{i=1}^{n} \log_2 x_i.   
\end{equation*}
It satisfies $0 \leq H(G) \leq \log_2 n$, and 
indeed $H(G) \leq \log_2 \chi(G)$ (see e.g.~\cite{Simo}). 
Let $Q_n$ be a partial order on $[n]$, with comparability graph $G$ and incomparability graph $\overline{G}$.  
Then, by~\cite[Theorem 2]{csi},       
\[
H(G) +  H(\overline{G}) = \log_2 n.
\]
We are interested here in graph entropy since there are bounds on the number 
of linear extensions of $Q_n$ in terms of the entropy of $G$ or $\overline{G}$.
Kahn and Kim~\cite{kk} proved
\begin{equation} \label{eqn.KK}
n! \, 2^{-n H(G)} \leq e(Q_n) \leq 2^{n H(\overline{G})},
\end{equation}
and Cardinal et al.~\cite{cardinal} proved 
\begin{equation} \label{eqn.Cardinal}
 e(Q_n) \geq 2^{\frac{1}{2} n H(\overline{G})}.
\end{equation}

\section{The chain and antichain examples}
\label{models}

We introduce two important standard examples of partial orders where we can write down the number of linear extensions.
The chain example $C=C(n_1,\ldots,n_k)$ is the poset consisting of 
disjoint chains of sizes $n_1, \ldots, n_k$ with no comparabilities between the chains.  We have
\begin{equation} \label{eqn.chain1a}
e(C) = \binom{n}{n_1, \ldots, n_k} \leq k^n.
\end{equation} 
To see the equality here, when for example $k=2$, observe that the 
only choices to make are which positions in the total order are occupied by the $n_1$ points from the first chain. 
It follows that for any poset $Q_n$ with width at most $k$ we have
\begin{equation} \label{eqn.chain1b}
e(Q_n)  \leq k^n,
\end{equation} 
since by Dilworth's Theorem~\cite{dilworth} a poset $Q_n$ with $w(Q_n) \leq k$ can 
be partitioned into $k$ or fewer chains.

The antichain example is the poset $A=A(n_1,\ldots,n_k)$ consisting of 
disjoint antichains $A_1,\ldots, A_k$ of sizes $n_1, \ldots, n_k$ such 
that if $x \in A_i$, $y \in A_j$ and $i<j$ then $x \prec y$.
Observe that the comparability graphs for the posets $A$ and $C$ 
are complementary (with the natural choice of sets of points).
We have
\begin{equation} \label{eqn.chain2a}
e(A) = \prod_{i=1}^{k} n_i! = n! \bigg /  \binom{n}{n_1, \ldots, n_k} \geq n! / k^n.
\end{equation}  
It follows by considering the level structure
that for any poset $Q_n$ with height at most $k$ we have
\begin{equation} \label{eqn.chain2b}
e(Q_n)  \geq n! / k^n.
\end{equation}

As an aside, recall that by the Kleitman-Rothschild Theorem~\cite{kr}, almost all 
posets $Q_n$ on $[n]$ have height $3$, and thus by~\eqref{eqn.chain2b} $e(Q_n) \geq n! \, 3^{-n}$ for almost all such posets.
In fact, by~\cite{Bri} almost all posets $Q_n$ have
\[ (e(Q_n)/n!)^{1/n} \sim 2^{-\tfrac32} \approx 0.35. \]  
See~\cite{BPS} and the references there for much more precise results 
on this and on the average number (which is a factor of order $\sqrt{n}$ larger than where $e(Q_n)$ is concentrated).

We shall be most interested in the special cases when the $n_i$ are as 
balanced as possible. The balanced chain example $\bC(n,k)$ is defined, 
for all integers $1 \leq k \leq n$, as $C(n_1, \ldots, n_k)$ where 
$\lfloor n/k \rfloor \leq n_1 \leq \cdots \leq n_k \leq \lceil n/k \rceil$, and the $n_i$ sum to $n$.
Then $e(\bC(n,k)) \leq k^n$ by~\eqref{eqn.chain1a} or~\eqref{eqn.chain1b}.
Also, for each given point there are at most $\lceil n/k \rceil -1 \leq \frac{n-1}{k}$ comparable points, so 
\begin{equation} 
\label{eqn.chaincomp}
\comp(\bC(n,k)) \leq \frac{1}{2} n \frac{n-1}{k} = \frac{1}{k} \binom{n}{2}.
\end{equation}
The balanced antichain example $\bA(n,k)$ is defined similarly, 
for all integers $1 \leq k \leq n$, as $A(n_1,\ldots,n_k)$, 
where the $n_i$ are as above. Then $e(\bA(n,k)) \geq n! / k^n$ by~\eqref{eqn.chain2a} or~\eqref{eqn.chain2b}.
Also, by~\eqref{eqn.chaincomp} and taking complements
\begin{equation} 
\label{eqn.antic2}
\comp(\bA(n,k)) \geq \left(1-\frac1{k}\right) \binom{n}{2}.
\end{equation}

For $1 \leq t \leq (n+1)/2$, we shall also use the examples $C(t,n-t)$ and $A(t,n-t)$
where the poset consists of two disjoint chains in the former 
case and two disjoint antichains in the latter, of cardinalities $t$ and $n-t$.  
We set $\tau := t/n.$ 
Then 
\begin{equation} \label{eqn.chain3}
e(C(t,n-t))= \binom{n}{t}  \leq \left(\frac{e}{\tau}\right)^{\tau n}, 
\end{equation}
and
\begin{equation} \label{eqn.chain4}
\comp(C(t,n-t)) = \binom{n}{2} - \tau(1-\tau) n^{2} 
\leq (1- \tau) \binom{n}{2},
\end{equation}
since $(1-\tau)n^2 \geq \tfrac{n-1}{2n} n^2 = \binom{n}{2}$.
Also
\begin{equation} \label{eqn.antic3}
e(A(t,n-t))= t! (n-t)! =n! \bigg / \binom{n}{t} \geq n! \left(\frac{\tau}{e} \right)^{\tau n},
\end{equation}  
and
\begin{equation} \label{eqn.antic4}
\comp(A(t,n-t)) = \tau(1-\tau) n^{2} \geq \tau \binom{n}{2}.
\end{equation}
Note that $f(\tau)=\left(\frac{\tau}{e} \right)^{\tau}$ is continuous and decreasing on $[0,1]$, and $f(0)=1$.  Thus
$g(\tau)=\left(\frac{e}{\tau} \right)^{\tau}=1/f(\tau)$ is continuous and increasing on $[0,1]$, with $g(0)=1$.

\section{Maximum numbers of extensions: $f^+(n,\delta)$}
\label{sec.max}

In this section we prove part (a) of Theorem~\ref{thm.main}, and give values 
for $c_1(\delta)$ and $c_2(\delta)$, see~\eqref{eqn.c1def} and~\eqref{eqn.c2def}.  
Also, in~\eqref{eqn.vdense} we give a formula for $f^+(n,\delta)$ in the very dense 
case, and in Proposition~\ref{prop.plusrate}, we describe the rate of 
convergence of $(f^+(n,\delta)/n!)^{1/n}$ 
to $0$ as $\delta=\delta(n) \to 1$.

\subsection*{Lower bounds}

Let $0 \leq \delta<1$. Let $k=\lceil (1-\delta)^{-1} \rceil$,
and let $Q_n$ be the balanced antichain example $\bA(n,k)$. 
Then by~\eqref{eqn.antic2}, since $1-\frac{1}{k} \geq \delta$, 
we have $\comp(Q_n) \geq \delta \binom{n}{2}$; 
and since $k \leq \tfrac{2-\delta}{1-\delta}$ we have
\[ e(Q_n) \geq n! \, k^{-n} \geq n! \, \left(\frac{1-\delta}{2 -\delta}\right)^n. \]
Thus
\begin{equation}  \label{eqn.fplusgeq1}
\left( \frac{f^+(n,\delta)}{n!} \right)^{1/n} \geq \left( \frac{e(Q_n)}{n!} \right)^{1/n} 
\geq \; \frac{1-\delta}{2-\delta} \; > \; \frac12 (1 - \delta).
\end{equation}
This result covers all $\delta$ with $0 \leq \delta<1$, but even for 
$\delta=0$ the lower bound is only $\tfrac12$, and we want a lower 
bound which approaches $1$ as $\delta \to 0$.
 
Assume that $0 < \delta \leq \tfrac12$. Let $t= \lceil \delta n \rceil$,      
let $\tau=t/n$, and let $Q_n$ be the two-antichain example $A(t,n-t)$.
By~\eqref{eqn.antic4} since $\delta \leq \tau < \delta + \tfrac1n$   
we have  
$\comp(Q_n) \geq \delta \binom{n}{2}$;    
and then by~\eqref{eqn.antic3}, since $(x/e)^x$ is decreasing on $[0, 1]$    
we have                     
\begin{equation}  \label{eqn.fplusgeq2}
\left( \frac{f^+(n,\delta)}{n!} \right)^{1/n} \geq \left( \frac{e(Q_n)}{n!} \right)^{1/n} \geq 
\left ( \frac{\delta + \tfrac1n}{e} \right )^{\delta + \frac1n}
\geq \left( \frac{2 \delta}{e} \right)^{2 \delta} 
\end{equation}
for  
$\tfrac1n \leq \delta \leq \tfrac12$.              
By~\eqref{eqn.fplusgeq1} and~\eqref{eqn.fplusgeq2}, we may set  
\begin{equation} \label{eqn.c1def}
c_1(\delta) = 
\begin{cases}
  \max \left \{ \frac{1-\delta}{2-\delta} \, , \: \left( \frac{2 \delta}{e} \right)^{2 \delta} \right \}  
    & \text{ if \,\,\,} 0 < \delta \leq \frac12 \cr
  \frac{1-\delta}{2-\delta} \hspace{1.17in}  
    & \text{ if \,\,\,} \frac12 < \delta <1.
\end{cases}
\end{equation}
Note that the lower bound $c_1(\delta)$ tends to 1 as $\delta \to 0$, as desired.

\subsection*{Upper bounds}

We first use a martingale concentration inequality to prove the upper 
bound $e(Q_n) \leq n! \, 2^{-\Omega(n)}$ on the number of linear extensions of a dense poset.  

\begin{lem}
\label{lem.fplus-ub1}
There is an absolute positive constant $c$, which can be taken to be $\frac{\log_2 e}{32} \approx 0.045$, 
such that the following holds. 
Let $0 < \delta \leq 1$, and let $Q_n$ be a poset on $n$ points with 
$\comp(Q_n) \geq \delta \binom{n}{2}$.
Then 
\[   
e(Q_n) \leq  n! \, 2^{- c \delta^2 n}. 
\]
\end{lem}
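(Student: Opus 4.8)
The plan is to use the Azuma--Hoeffding martingale inequality applied to a uniformly random linear extension, exploiting that a dense comparability graph forces many edges to ``lose entropy'' when we reveal the positions of the points one at a time. Concretely, let $\sigma$ be a linear extension of $Q_n$ chosen uniformly at random, so there are $e(Q_n)$ equally likely outcomes. I would expose $\sigma$ by a vertex-revealing or position-revealing process: at step $i$ we learn which point occupies position $i$ (equivalently, we build $\sigma$ from the bottom). Let $X_i$ be the point in position $i$, and consider the Doob martingale $Z_i = \mathbb{E}[\, -\log_2 p(\sigma) \mid X_1,\dots,X_i\,]$ where $p(\sigma) = 1/e(Q_n)$ is constant; more usefully, track directly the conditional probabilities and use that $H_{\mathrm{Shannon}}(\sigma) = \log_2 e(Q_n)$ decomposes as a sum of conditional entropies $\sum_i H(X_i \mid X_1,\dots,X_{i-1})$.

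The key point is a lower bound on the entropy deficit $n\log_2 n - \log_2 e(Q_n)$ coming from comparabilities. When we reveal positions from the bottom, at the step where we are about to place the point into position $i$, the number of available candidates is $n-i+1$ minus the number of already-placed points that are required to come later --- but more to the point, each comparable pair $\{x,y\}$ with $x \prec y$ constrains the relative order, and there are $\ge \delta\binom{n}{2}$ such pairs. I would make this precise via a counting/averaging argument: among the $n$ points, by a convexity argument some constant fraction of points have comparability-degree $\ge \tfrac{\delta}{2}(n-1)$ (say), and for each such point, at the moment it is revealed the conditional entropy is at most $\log_2$ of (remaining points not yet forced), which on average is smaller than $\log_2(n-i+1)$ by a term of order $\delta$. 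Summing over the $\Omega(n)$ such ``low-entropy'' steps gives an entropy deficit of $\Omega(\delta^2 n \cdot \text{something})$ --- here I would have to be careful to extract the clean $\delta^2 n$ rate; alternatively, one can bypass entropy and run Azuma directly on the random variable $\log_2(\text{number of linear extensions consistent with } X_1,\dots,X_i)$, whose bounded differences are $O(\log_2 n)$, which is too weak, so instead one should bound the differences of a more carefully chosen exposure martingale (revealing, for each point in turn, only its rank among its comparability-neighbours, or using a random bisection/insertion order) so that each increment is $O(1)$ and the total expected decrease from the starting value $\log_2 n!$ is $\Omega(\delta^2 n)$.

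The cleanest route, and the one I would push hardest on, is: pick a uniformly random permutation $\pi$ of $[n]$ giving an ordering in which to insert the points, and for a uniformly random linear extension $\sigma$ let $R_i$ be the position of the $i$-th inserted point among the first $i$ inserted points (so $R_i \in \{1,\dots,i\}$ and $\sigma$ is recovered from $(R_1,\dots,R_n)$). Then $\log_2 e(Q_n) = \sum_i H(R_i \mid R_1,\dots,R_{i-1})$ after averaging over $\pi$ as well; the $i$-th inserted point, conditioned on having $\ge 1$ of its comparability-neighbours already inserted, has strictly fewer than $i$ admissible insertion positions, and a quantitative version of this --- the probability that $R_i$ equals a forced endpoint is bounded below --- costs $\Omega(\delta)$ bits on average per such step, over $\Omega(\delta n)$ such steps. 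The main obstacle, as I see it, is getting the clean quadratic dependence $c\,\delta^2 n$ rather than a weaker bound like $c\,\delta n (1-\text{something})$ or $c\delta^2 n/\log n$: one needs both that a $\delta$-fraction of steps see a neighbour (giving one factor of $\delta$) and that the conditional entropy drop at such a step is itself $\Omega(\delta)$ bits (giving the second factor), and the second estimate requires controlling the conditional distribution of $R_i$, which is where a Jensen/convexity inequality on $-\log_2(1 - q)$ with $q$ the forced-endpoint probability, combined with an averaging over which neighbour is the witness, should deliver the bound. I expect the explicit constant $\tfrac{\log_2 e}{32}$ to fall out of being slightly lossy at each of these two $\delta$-factor steps (e.g.\ $\tfrac12 \cdot \tfrac12$ each, plus a $\tfrac12$ from ``half the points have above-average degree'' and a $\log_2 e$ from converting a natural-log inequality $-\ln(1-q)\ge q$), rather than from any delicate optimization.
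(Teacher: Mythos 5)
Your proposal is a strategy outline, not a proof: the step you yourself flag as ``the main obstacle'' --- showing that the conditional entropy of each insertion rank drops by enough bits at enough steps --- is precisely the content of the lemma, and none of the three routes you sketch (Doob martingale on $-\log_2 p(\sigma)$, Azuma on the logarithm of the number of consistent extensions, entropy of insertion ranks $R_i$) is carried to the point where that estimate is established. That said, your third route does close, and more easily than you fear. If the $i$-th inserted point has $d_i$ comparability-neighbours among the first $i-1$ inserted points, then (since by transitivity all of its already-inserted predecessors lie below all of its already-inserted successors in the restricted linear extension) the number of admissible gaps for it is at most $i-d_i$; hence $\log_2 e(Q_n)=\sum_i H(R_i\mid R_1,\dots,R_{i-1})\le \sum_i \log_2(i-d_i)$ holds deterministically for every insertion order. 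Averaging over a uniformly random insertion order gives $\mathbb{E}[d_i]=\frac{i-1}{n-1}\cdot\frac{2\comp(Q_n)}{n}\ge \delta(i-1)$, and Jensen together with $\log_2(1-x)\le -x\log_2 e$ yields $e(Q_n)\le n!\,e^{-\delta(n-H_n)}$ with $H_n$ the harmonic number --- a \emph{linear} dependence on $\delta$ in the exponent. Your worry about ``getting the clean quadratic dependence'' is an artefact of anchoring on the stated constant rather than computing what your own method delivers; the quadratic rate in the lemma is an artefact of the paper's method, not a feature you need to reproduce.

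The paper's proof is entirely different and avoids conditional entropies altogether: take $\tau$ uniform over all $n!$ permutations (not over linear extensions), let $g(\tau)$ be the number of conflicted pairs, note that $\mathbb{E}[g(\tau)]=\comp(Q_n)/2\ge \delta n(n-1)/4$ and that a single transposition changes $g$ by at most $2(n-1)$, and apply the concentration inequality for functions of a uniformly random permutation to get $\mathbb{P}(g(\tau)=0)\le e^{-\delta^2 n/32}$; since $e(Q_n)=n!\,\mathbb{P}(g(\tau)=0)$, this is the lemma, the $\delta^2$ arising from squaring the deviation in the exponent. The one idea you are missing if you want to reconstruct that argument is to randomize over \emph{all} permutations rather than over linear extensions, which replaces all the delicate conditional-distribution control by a single large-deviation bound for a conflict count with an exactly computable mean. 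If instead you want to submit an argument along your own lines, write out the admissible-gaps computation above; as it stands, your text identifies the right objects but proves nothing.
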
   
\begin{proof}
Let $S_n$ be the set of all $n!$ permutations of $[n]$.
For $\tau \in S_n$ let $g(\tau)$ be the number of conflicts of $\tau$ with 
the poset $Q_n$: that is, the number of ordered pairs 
$(i, j)$, such that $i < j$ in $\tau$ and $j \prec i$ in the partial order. 
Obviously, $\tau$ is a linear extension of $Q_n$ if and only if $g(\tau)=0$.
If $\tau$ and $\tau'$ differ by one transposition $\sigma$, i.e. $\tau=\tau^{\prime}\circ \sigma$ 
in the symmetric group ($\sigma$ acting first say), 
then if $\sigma$ interchanges $x$ and $y$, it changes the number of conflicts 
by at most $n-1$ for each of $x$ and $y$, so 
\begin{equation}
\label{bounded}
\vert g(\tau')-g(\tau)\vert \leq 2(n-1).
\end{equation} 
When $\tau$ is chosen uniformly at random from $S_n$, 
we have that 
\[ \mathbb{E}(g(\tau)) = \comp(Q_n)/2 \]
since for each $x \neq y$,
$\mathbb{P}(x < y) = \mathbb{P}(y < x) = \frac{1}{2}$ (where $<$ refers to the ordering $\tau$).
Put $s=\comp(Q_n)/2 \geq \delta n(n-1)/4$. 
We may now use Theorem $7.4$ of~\cite{McD}, together with the inequality~\eqref{bounded}, to obtain
\begin{eqnarray*}
\mathbb{P}(g(\tau)=0) 
& = &
\mathbb{P}(g(\tau)- s \leq -s)  \\
& \leq &
\exp \left(\frac{-2s^2}{4n(n-1)^2} \right) \; \leq \; e^{- \delta^2 n/32}.
\end{eqnarray*}
Hence
\[ e(Q_n) = n! \, \mathbb{P}(g(\tau)=0) \leq n! \, e^{- \delta^2 n/32} \]
which completes the proof. 
\end{proof}

The last lemma covers all $\delta$ with $0<\delta \leq 1$, but 
even when $\delta=1$ we just find  $e(Q_n) \leq  n! \, 2^{- c n}$. 
We need another result to show that when $\delta \to 1$ we have that
$(f^+(n,\delta)/n!)^{1/n}=o(1)$.
This follows from Lemma~\ref{lem.maxub} below (which says nothing 
asymptotically if $\delta< 1- 2/e \approx 0.264$). 
In fact we shall see that if $\delta \to 1$, but not too quickly, then
$(f^+(n,\delta)/n!)^{1/n} = \Theta(1\!-\!\delta)$. 
First, here is a preliminary observation: we omit the easy proof.   
\begin{obs} 
\label{obs.nearchain}
Let $Q$ be a partial order on a set $S$ of at least two elements, 
and let $v \in S$. Suppose that $S'=S \setminus \{v\}$ is a chain, 
and $v$ is incomparable to exactly $x$ points in $S'$. Then $e(Q)=x+1$.
\end{obs}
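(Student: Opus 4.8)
The statement to prove is Observation~\ref{obs.nearchain}: if $Q$ is a partial order on a set $S$ of at least two elements, $v\in S$, $S'=S\setminus\{v\}$ is a chain, and $v$ is incomparable to exactly $x$ points of $S'$, then $e(Q)=x+1$.

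The plan is to describe linear extensions of $Q$ by where the single element $v$ gets inserted into the unique linear order on the chain $S'$. First I would observe that since $S'$ is a chain, it has exactly one linear extension, so enumerate $S'$ as $s_1\prec s_2\prec\cdots\prec s_{n-1}$ where $n=|S|$; any linear extension of $Q$ restricted to $S'$ must be this order. Hence a linear extension of $Q$ is completely determined by the position $i\in\{0,1,\ldots,n-1\}$ at which $v$ is inserted, i.e. by writing $s_1,\ldots,s_i,v,s_{i+1},\ldots,s_{n-1}$. So $e(Q)$ equals the number of such positions $i$ that respect all comparabilities involving $v$.

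Next I would analyse which positions are legal. Let $C_-=\{s_j: s_j\prec v\}$ and $C_+=\{s_j: v\prec s_j\}$ be the points of $S'$ below and above $v$; by hypothesis $|C_-|+|C_+|=(n-1)-x$. The key point is that because $Q$ is a partial order (transitive) and $S'$ is a chain, $C_-$ is a down-set of the chain and $C_+$ is an up-set, and they are disjoint; so $C_-=\{s_1,\ldots,s_a\}$ for some $a\ge 0$ and $C_+=\{s_b,\ldots,s_{n-1}\}$ for some $b\le n$, with $a<b$ (indeed the $x=b-1-a$ points $s_{a+1},\ldots,s_{b-1}$ are exactly those incomparable to $v$). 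Inserting $v$ at position $i$ gives a valid linear extension precisely when every element of $C_-$ precedes $v$ and every element of $C_+$ follows it, i.e. when $a\le i\le b-1$. That is an interval of $b-a = x+1$ consecutive legal positions, so $e(Q)=x+1$.

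The main (minor) obstacle is simply making the transitivity argument airtight: one must check that $C_-$ really is an initial segment and $C_+$ a final segment of the chain, which uses transitivity of $\prec$ together with totality on $S'$ — e.g. if $s_j\prec v$ and $k<j$ then $s_k\prec s_j\prec v$ so $s_k\in C_-$; similarly for $C_+$; and antisymmetry/irreflexivity rules out $C_-\cap C_+\neq\emptyset$ and forces $a<b$. Everything else is bookkeeping, and the result $e(Q)=x+1$ drops out immediately. This is exactly why the authors call it an observation and omit the proof.
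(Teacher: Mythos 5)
Your proof is correct and is evidently the argument the authors had in mind when they wrote ``we omit the easy proof'': the paper gives no proof of this Observation, and your insertion-position count (using transitivity to show the points below $v$ form an initial segment of the chain and those above a final segment, leaving an interval of exactly $x+1$ legal slots) is the natural and complete justification. Nothing is missing.
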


\begin{lem}
\label{lem.maxub}
Let $0<\delta<1$ and suppose that $\comp(Q_n) \geq \delta \binom{n}{2}$.  Then
\[ e(Q_n) \leq n! \: e^{\frac{2}{1-\delta}}  \left(\frac{e(1-\delta)}{2} \right)^n. \]
\end{lem}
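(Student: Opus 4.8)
The plan is to bound $e(Q_n)$ by the number of linear extensions of a cleverly chosen poset whose comparability graph is a subgraph of that of $Q_n$, using the monotonicity principle recalled in Section~\ref{sec.prelims} (a strictly larger comparability graph means fewer linear extensions). Since $\comp(Q_n) \geq \delta\binom{n}{2}$, the \emph{incomparability} graph $\overline{G}$ of $Q_n$ has at most $(1-\delta)\binom{n}{2}$ edges, so it is sparse. The idea is to delete a small set $R$ of vertices so that $\overline{G} - R$ has no edges at all, i.e.\ $Q_n - R$ is a chain; then $Q_n$ restricted to $[n]\setminus R$ together with $R$ is an ``almost chain'' and we can estimate $e(Q_n)$ by inserting the points of $R$ one at a time.

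First I would choose $R$ to be a vertex cover of $\overline{G}$ of size $|R| = r$. Since $\overline{G}$ has at most $(1-\delta)\binom{n}{2} < \tfrac12(1-\delta)n^2$ edges, a greedy argument (repeatedly deleting a vertex of maximum degree in $\overline{G}$, or just any vertex meeting a remaining edge) gives a vertex cover; to get a good bound one wants $r$ of order $(1-\delta)n$. Concretely, one can argue that $\overline{G}$ has an independent set of size at least $n - r$ where $n-r$ is roughly $\tfrac{1}{(1-\delta)n} \cdot$ (something), using that a graph on $n$ vertices with $m$ edges has an independent set of size $\geq n^2/(n+2m)$ (Turán-type bound); with $m \le (1-\delta)\binom{n}{2}$ this gives an independent set (= chain in $Q_n$) of size at least roughly $\tfrac{n}{2-\delta}$, hence $r \le n - \tfrac{n}{2-\delta} = \tfrac{(1-\delta)n}{2-\delta}$. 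That is the quantitative heart of the matter and the step I expect to be the main obstacle — getting a clean vertex-cover/independent-set bound that feeds through to exactly the stated constant $\tfrac{e(1-\delta)}{2}$ rather than something weaker.

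Next, having fixed a chain $S' \subseteq [n]$ with $|S'| = n - r$ and $R = [n]\setminus S'$ with $|R| = r$, I would bound $e(Q_n)$ above by inserting the points of $R$ into linear extensions of $Q_n|_{S'}$. Since $Q_n|_{S'}$ is a chain it has exactly one linear extension, and each point $v \in R$, when inserted, can go into at most $n$ positions (more precisely, generalising Observation~\ref{obs.nearchain}, if we add the points of $R$ one by one to the current chain-like poset, the $j$-th added point has at most (number of incomparable points)$+1 \le n$ insertion slots, but a sharper count gives at most $|S'| + j \le n$). This yields
\[
e(Q_n) \;\le\; \prod_{j=1}^{r} (\,|S'| + j\,) \;\le\; \frac{n!}{(n-r)!} \;\le\; n! \Big(\frac{e}{n-r}\Big)^{r}\! ,
\]
using $(n-r)! \ge ((n-r)/e)^{n-r}$ is the wrong direction, so instead I would write $n!/(n-r)! \le n^r$ — no, to get the factor $\big(\tfrac{e(1-\delta)}{2}\big)^n$ one needs $r \approx (1-\delta)n/2$ and the bound $e(Q_n) \le n! \cdot \big(\text{stuff}\big)^{r}$ with the per-point factor being about $2/(n-r) \cdot (\dots)$; the precise bookkeeping is exactly the routine calculation I am deferring.

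Finally I would assemble the pieces: with $r \le \tfrac{(1-\delta)n}{2-\delta} + O(1)$ (the $O(1)$ accounting for rounding in the independent-set bound), substitute into the product bound, use Stirling in the form $n!/(n-r)! = n! / (n-r)! $ and the inequality $(n-r)! \ge ((n-r)/e)^{n-r}$, and simplify the exponential terms. The additive constant $O(1)$ in $r$ contributes a bounded multiplicative factor, which is absorbed into the $e^{2/(1-\delta)}$ prefactor in the statement; tracking this prefactor precisely (it blows up as $\delta \to 1$ because $r$ could be as small as a single-digit integer even while $(1-\delta)n$ is tiny, so the rounding matters) is the second delicate point, but it is a finite computation once the structure above is in place.
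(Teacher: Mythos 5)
There is a genuine gap, and it is not just deferred bookkeeping: the quantitative heart of your argument --- that $\overline{G}$ has an independent set (i.e.\ a chain of $Q_n$) of size about $\tfrac{n}{2-\delta}$ --- is false. The Tur\'an-type bound $n^2/(n+2m)$ with $m \le (1-\delta)\binom{n}{2}$ gives only $\tfrac{n^2}{n+(1-\delta)n(n-1)} \to \tfrac{1}{1-\delta}$, a \emph{constant}, not a linear function of $n$. Nor can this be repaired by a sharper independent-set bound: the balanced two-level antichain example $\bA(n,2)$ has about $\tfrac12\binom{n}{2}$ comparable pairs but height $2$, so its longest chain has two elements and your set $R$ must contain $n-2$ vertices; then $\prod_{j=1}^{r}(|S'|+j) = n!/|S'|!$ is essentially $n!$ and yields nothing. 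As a further sanity check, if a chain of size $\tfrac{n}{2-\delta}$ did exist, your bound $n!/(n-r)! \le n^{r}$ with $r=\alpha n$, $\alpha<1$, would give $n^{\alpha n}$, which is far smaller than $e(\bA(n,2)) = ((n/2)!)^2 \approx n!\,2^{-n}$ --- so the conclusion you would reach is actually false, confirming that the chain-size step cannot hold.

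The repair is to keep your insertion idea but abandon the chain and count slots using incomparability rather than position. Insert the vertices in an arbitrary order $1,\dots,n$; when vertex $v$ is added to a linear extension of the poset on $\{1,\dots,v-1\}$, the number of admissible positions is at most $x_v+1$, where $x_v$ is the number of \emph{earlier vertices incomparable to} $v$ (Observation~\ref{obs.nearchain}, iterated), not the trivial count of all $v$ gaps. This gives $e(Q_n)\le \prod_{v}(x_v+1)$ with $\sum_v x_v = |E(\overline{G})| \le (1-\delta)\binom{n}{2}$, and the arithmetic--geometric mean inequality then yields $e(Q_n) \le \bigl(\tfrac{1-\delta}{2}n+1\bigr)^n$, from which the stated bound follows via $n^n\le e^n n!$ and $\bigl(1+\tfrac{2}{(1-\delta)n}\bigr)^n \le e^{2/(1-\delta)}$. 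This is exactly the paper's proof; your version discards the key information (that most earlier vertices are comparable to the inserted one) at the insertion step, which is why it has to lean on a chain that does not exist.
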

\begin{proof}
Let $\overline{G}$ be the complement of the comparability graph $G$ of the 
poset $Q_n$ and let $E(\overline{G})$ be the set of edges of $\overline{G}$.
Consider the vertices in the order $1, 2, \ldots, n$ and let $x_v$ 
be the `back-degree' of vertex $v$ in $\overline{G}$ (that is, the number of edges $uv$ in $\overline{G}$ with $u<v$). Then, by considering building up the linear extension step-by-step and using 
Observation~\ref{obs.nearchain}, we may see that 
\[ e(Q_n) \leq \prod_{v=1}^{n} (x_v+1) \leq  \left(\frac{1}{n} \sum_{v=1}^{n} (x_v+1) \right)^n, \]
by the arithmetic-geometric means inequality.  But  
\[ \sum_{v=1}^{n} (x_v+1) = |E(\overline{G})| +n \leq (1-\delta) \binom{n}{2}+n = n\left(\frac{1-\delta}{2} (n-1) +1\right).\]
Hence 
\[ e(Q_n) \leq \left(\frac{1-\delta}{2} n +1\right)^n = \left(\frac{1-\delta}{2} n\right)^n \left(1+ \frac2{(1-\delta)n}\right)^n.\]
But the last factor is at most $e^{\frac{2}{1-\delta}}$, and using $n^n \leq e^n n!$ concludes the proof. 
\end{proof}

It follows from the last lemma that if $\delta \geq 0$ and $1-\delta  \geq d/n$ for some $d>0$ then
\begin{equation} \label{eqn.fplusleq}
(f^+(n,\delta)/n!)^{1/n} \leq \tfrac12 e^{\frac{2}{d}+1} (1-\delta).
\end{equation}
Given $0 \leq \delta <1$ we may define $c_2(\delta)$ as follows.
Let $n_0=n_0(\delta)= 6/(1-\delta)$; and let
\begin{equation} \label{eqn.c2def}
c_2(\delta) = \min \left \{ 2^{-c \delta^2} \, , \:  2 (1-\delta) \right \},
\end{equation}               
where $c>0$ is the constant from Lemma~\ref{lem.fplus-ub1}.  
Then $c_2(\delta)<1$, $c_2(\delta) \to 0$ as $\delta \to 1$; and 
by Lemma~\ref{lem.maxub}, and inequality~\eqref{eqn.fplusleq} with $d=6$ (noting that $e^{4/3}<4$),
\[ (f^+(n,\delta)/n!)^{1/n} \leq c_2(\delta) \;\; \mbox{ for all } n \geq n_0,\]
as required in part (a) of Theorem~\ref{thm.main}.

Now we consider the very dense case, when we may determine $f^+(n,\delta)$ exactly. 
For, if $Q_n$ has $i$ incomparable pairs, then clearly $e(Q_n) \leq  2^{i}$.  Thus we always have
\begin{equation} \label{eqn.vdense0}
f^+(n,\delta) \leq 2^{\lfloor (1-\delta) \binom{n}{2} \rfloor}.
\end{equation}
But, if $i \leq n/2$, then $\bA(n,n-i)$ has $i$ incomparable pairs and $e(\bA(n,n-i))=2^i$.  Hence
\begin{equation} \label{eqn.vdense}
f^+(n,\delta) = 2^{\lfloor (1-\delta) \binom{n}{2} \rfloor} \;\;\;\; \mbox{ when } \;\; (1-\delta) \binom{n}{2} \leq n/2.
\end{equation}

Next we determine the rate of convergence of 
$(f^+(n,\delta)/n!)^{1/n}$ 
to $0$ as $\delta = \delta(n) \to 1$. 
\begin{prop} \label{prop.plusrate}
Let $\delta=\delta(n) \to 1$ as $n \to \infty$.  Then
\[ (f^+(n,\delta)/n!)^{1/n} = \Theta(\max \{1\!-\!\delta, 1/n\}). \]
\end{prop}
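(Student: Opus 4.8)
The plan is to prove matching upper and lower bounds on $(f^+(n,\delta)/n!)^{1/n}$ of order $\max\{1-\delta,1/n\}$, using the tools already assembled. For the upper bound, I would split into two regimes. When $1-\delta \geq 1/n$, inequality~\eqref{eqn.fplusleq} with $d=1$ gives $(f^+(n,\delta)/n!)^{1/n} \leq \tfrac12 e^3 (1-\delta) = O(1-\delta)$, which handles this case directly. When $1-\delta < 1/n$, the poset is so dense that the number of incomparable pairs is $i = \lfloor(1-\delta)\binom{n}{2}\rfloor < (n-1)/2$, so $i \leq n/2$ for large $n$; then~\eqref{eqn.vdense0} gives $f^+(n,\delta) \leq 2^i \leq 2^{n/2}$, hence $(f^+(n,\delta)/n!)^{1/n} \leq 2^{1/2}/((n/e)) = O(1/n)$ using $n! \geq (n/e)^n$. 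Combining, $(f^+(n,\delta)/n!)^{1/n} = O(\max\{1-\delta,1/n\})$.

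For the lower bound, I would again use two constructions matching the two regimes. When $1-\delta \geq 2/n$ (say), set $t = \lceil(1-\delta)n/2\rceil$ — or more precisely choose $t$ so that $\tau(1-\tau)n^2 \geq \delta\binom{n}{2}$ holds with $\tau = t/n$ of order $1-\delta$ — and take $Q_n = A(t,n-t)$. By~\eqref{eqn.antic4} the comparability count is correct, and by~\eqref{eqn.antic3}, $(e(Q_n)/n!)^{1/n} \geq (\tau/e)^\tau$, which is $\exp(-O((1-\delta)\log\frac{1}{1-\delta})) = (1-\delta)^{o(1)}$; this is not quite $\Omega(1-\delta)$, so instead I would use the balanced antichain example $\bA(n,k)$ with $k = \lceil(1-\delta)^{-1}\rceil$, which by the computation already done in~\eqref{eqn.fplusgeq1} gives $(e(Q_n)/n!)^{1/n} \geq \frac{1-\delta}{2-\delta} \geq \tfrac12(1-\delta) = \Omega(1-\delta)$, while $\comp(Q_n) \geq \delta\binom{n}{2}$ by~\eqref{eqn.antic2} — and this works for all $\delta < 1$, including $\delta \to 1$. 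When $1-\delta < 2/n$, one needs a poset with at least $\delta\binom{n}{2}$ comparable pairs, i.e.\ at most $i := \lfloor(1-\delta)\binom{n}{2}\rfloor$ incomparable pairs, and as many linear extensions as possible; since $i < n/2$ for large $n$, take $Q_n = \bA(n,n-i)$, which by the remark preceding~\eqref{eqn.vdense} has exactly $i$ incomparable pairs and $e(Q_n) = 2^i \geq 1$. If $i \geq 1$ this gives $(e(Q_n)/n!)^{1/n} \geq 2^{i/n}/(n/e) \geq $ something of order $1/n$ (using $2^{i/n} \geq 1$ and $n! \leq (en)^n$... actually one needs $n! \leq e \sqrt{n}(n/e)^n$ from Stirling, giving $(n!)^{1/n} \sim n/e$, so $(e(Q_n)/n!)^{1/n} \sim e/n = \Omega(1/n)$). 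If $i = 0$ then $\delta\binom{n}{2} > \binom{n}{2}-1$ forces $\delta$ extremely close to $1$, $Q_n$ must be a chain, $f^+=1$, and $\max\{1-\delta,1/n\} = \Theta(1/n)$ while $(1/n!)^{1/n} = \Theta(1/n)$ as well — so the bound still holds.

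The main obstacle is the interface between the two regimes and getting the constants to line up cleanly, particularly verifying that the antichain construction $\bA(n,k)$ with $k \sim 1/(1-\delta)$ genuinely yields $\Omega(1-\delta)$ rather than merely $(1-\delta)^{1+o(1)}$ — this is why I prefer the clean identity $e(\bA(n,k)) \geq n!/k^n$ over the two-antichain bound, since $k^{-n}$ directly gives the linear factor $(1-\delta)$ after taking $n$th roots. A secondary technical point is that for $\delta \to 1$ with $1-\delta$ between roughly $2/n$ and a constant, one must check that $k = \lceil(1-\delta)^{-1}\rceil \leq n$ so that $\bA(n,k)$ is well-defined with all parts nonempty; this holds precisely when $1-\delta \geq 1/n$, dovetailing with the regime split. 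Everything else is bookkeeping with the inequalities $(n/e)^n \leq n! \leq e\sqrt{n}(n/e)^n$ and the elementary fact that $\frac{1-\delta}{2-\delta}$ and $1-\delta$ agree up to a factor of $2$.
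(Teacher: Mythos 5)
Your proposal is correct and follows essentially the same route as the paper: split at $1-\delta \approx 1/n$, use~\eqref{eqn.fplusleq} (from Lemma~\ref{lem.maxub}) and the balanced antichain bound~\eqref{eqn.fplusgeq1} in the first regime, and~\eqref{eqn.vdense0} in the very dense regime. The only difference is that in the very dense case the paper takes the trivial lower bound $f^+(n,\delta)\geq 1$ (which already gives $(f^+/n!)^{1/n}\geq 1/n$ since $n!\leq n^n$), so your explicit $\bA(n,n-i)$ construction there, while fine, is not needed.
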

\begin{proof}
It suffices to consider the cases (a) $1-\delta \geq 1/n$ and (b)  $1-\delta \leq 1/n$.
In case (a), the lower bound from~\eqref{eqn.fplusgeq1} and the upper bound from~\eqref{eqn.fplusleq} show that
$(f^+(n,\delta)/n!)^{1/n} = \Theta(1\!-\!\delta)$.
In case (b),
$(1-\delta) \binom{n}{2} \leq n/2$, so by~\eqref{eqn.vdense0} we have $f^+(n,\delta) \leq 2^{n/2}$: thus
$1 \leq (f^+(n,\delta))^{1/n} \leq \sqrt{2}$, and so
\[ 1/n \leq (f^+(n,\delta)/n!)^{1/n} \leq e \sqrt{2}/n.\]   
This completes the proof.
\end{proof}

We proved two upper bounds on $e(Q_n)$ above, in Lemmas~\ref{lem.fplus-ub1} 
and~\ref{lem.maxub}. To close this section, we consider briefly whether 
the upper bound in~\eqref{eqn.KK} based on entropy, namely $e(Q_n) \leq 2^{nH(\overline{G})}$, could have been useful here.

Let $k$ be an integer at least $2$, let $\delta=\tfrac1{k}$, and let $Q_n$ be the balanced antichain example $\bA(n,k)$, with comparability graph $G$. We have $\comp(Q_n) \geq \delta \binom{n}{2}$ by~\eqref{eqn.antic2}.  
But $H(G) \leq \log_2 \chi(G) \leq \log_2 k$, so $H(\overline{G}) = \log_2 n - H(G) \geq \log_2(n/k)$.  
Hence the upper bound from~\eqref{eqn.KK} is
\[ 2^{nH(\overline{G})} \geq (n/k)^n. \]
In the case $\delta=\tfrac12$, the upper bound is at least $(n/2)^n \gg n!$,
so of course this tells us nothing.  
For smaller $\delta$ (when $k \geq 3$) we see that the upper bound from~\eqref{eqn.KK} is at least           
\[(n/k)^n = n! \, ((1+o(1))\, e \, \delta)^n, \]   
which is at least a factor of about $2^n$ greater than the upper bound in Lemma~\ref{lem.maxub}.

\section{Minimum numbers of extensions: $f^-(n,\delta)$}
\label{sec.min}

In this section we prove part (b) of Theorem~\ref{thm.main}, and give 
values for $c_3(\delta)$ and $c_4(\delta)$, see~\eqref{eqn.c3def} and~\eqref{eqn.c4def}.  
Also, in~\eqref{eqn.vsparse2} we give a formula for $f^-(n,\delta)$ in the 
very sparse case, and in Proposition~\ref{prop.minusrate}, we describe 
the rate of growth of $f^-(n,\delta)^{1/n}$ as $\delta=\delta(n) \to 0$.

\subsection*{Upper bounds}

Let $0< \delta \leq 1$.  Set $k=\lceil \delta^{-1} \rceil$,  
and let $Q_n$ be the balanced chain example $\bC(n,k)$. We have that $\comp(Q_n) \leq \frac{1}{k} \binom{n}{2} 
\leq \delta \binom{n}{2}$ by~\eqref{eqn.chaincomp}, and $e(Q_n) \leq k^n$ by~\eqref{eqn.chain1a}. Thus
\begin{equation} \label{eqn.fminusub1}
f^-(n,\delta)^{1/n} \leq \lceil \delta^{-1} \rceil \leq 2 \delta^{-1}.
\end{equation}
We wish also to show that $f^-(n,\delta)^{1/n} \to 1$ when $\delta = \delta(n) \to 1$ (which the last result does not give).  
Assume that $\tfrac12 \leq \delta < 1$. 
Let $t= \lceil (1-\delta) n \rceil$, let $\tau=t/n$ and note that $1-\delta \leq \tau < 1-\delta+1/n$.
Let $Q_n$ be the two-chain example $C(t, n-t)$.  By~\eqref{eqn.chain4} (since $t \leq (n+1)/2$\,) we have $\, \comp(Q_n) \leq (1-\tau) \binom{n}{2} \leq \delta  \binom{n}{2}$.
Hence by~\eqref{eqn.chain3}
\[ f^-(n,\delta)^{1/n} \leq e(Q_n)^{1/n} \leq \left(\frac{e}{\tau} \right)^{\tau} < \left(\frac{e}{1-\delta + 1/n} \right)^{1 - \delta +1/n}, \]
since $g(x)=(e/x)^x$ is increasing on $(0,1)$.  Thus if $\tfrac12 \leq \delta < 1$ and $1-\delta \geq 1/n$ then
\[ f^-(n,\delta)^{1/n} < \left(\frac{e}{2(1-\delta)} \right)^{2(1 - \delta)},\]
again using the fact that $g(x)$ is increasing. We now see that we can set
\begin{equation} \label{eqn.c4def}
c_4(\delta) = 
\begin{cases}
 \lceil \delta^{-1} \rceil 
   & \text{ if \,\,\,} 0<\delta < \frac12 \cr
 \min \left \{ \lceil \delta^{-1} \rceil \, , \: \left(\frac{e}{2(1-\delta)} \right)^{2(1 - \delta)} \right \} 
 & \text{ if \,\,\,} \frac12 \leq \delta <1.
\end{cases}                                  
\end{equation}
For, given $0 < \delta<1$, by~\eqref{eqn.fminusub1} and the last inequality we have
\[ f^-(n,\delta)^{1/n} \leq c_4(\delta) \;\; \mbox{ once }  n \geq 1/(1-\delta).\]
Note that the upper bound here tends to $1$ as $\delta \to 1$.

\subsection*{Lower bounds}

Let us state a preliminary inequality corresponding to inequality~(\ref{eqn.chain2b}).
For any poset $Q_n$ with height at most $k$ we have
\begin{equation} \label{eqn.height-elb}
e(Q_n)  \geq 2^{n-k}.
\end{equation}
To see this, let $Q_n$ have $h \leq k$ levels, and let level $i$ contain $r_i \geq 1$ elements for $i=1, \ldots, h$. Then, 
using the inequality $r_i! \geq 2^{r_i-1}$, we have 
\[ e(Q_n) \geq \prod_{i=1}^{h} r_i! \geq \prod_{i=1}^{h} 2^{r_{i}-1}= 2^{n-h} \geq 2^{n-k}. \]

Our first lower bound covers the whole range $0< \delta<1$, and in particular when $\delta$ is near to $1$.    
\begin{lem} 
\label{lem.half}
Let $0< \delta < 1$, 
and let $Q_n$ be a poset on $n$ vertices with $\comp(Q_n) \leq \delta \binom{n}{2}$.  Then
\[
e(Q_n) \geq  \max\{ 2^{(1-\sqrt{\delta})(n-1)}, 2^{\frac12(1-\delta)n} \}. 
\]
\end{lem}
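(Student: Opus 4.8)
The plan is to use the entropy lower bound in~\eqref{eqn.Cardinal}, namely $e(Q_n) \geq 2^{\frac12 n H(\overline{G})}$, so the task reduces to showing $H(\overline{G}) \geq (1-\delta)$ whenever the incomparability graph $\overline{G}$ has at least $(1-\delta)\binom{n}{2}$ edges. First I would recall the identity $H(G) + H(\overline{G}) = \log_2 n$ from~\cite{csi}, together with the bound $H(G) \leq \log_2 \chi(G)$; since $G$ is the comparability graph of a poset, it is perfect, so $\chi(G)$ equals the clique number of $G$, which is exactly the height $h(Q_n)$. Hence it would suffice to bound $h(Q_n)$ from above in terms of $\delta$: if $h(Q_n) \leq n \cdot 2^{-(1-\delta)}$ then $H(G) \leq (1-\delta)\log_2 n \ldots$ — wait, that is not quite the right form, so let me reconsider.

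The cleaner route is direct: we want $H(\overline{G}) \geq 1-\delta$, i.e. $\log_2 n - H(G) \geq 1-\delta$, i.e. $H(G) \leq \log_2(n) - (1-\delta) = \log_2(n \cdot 2^{-(1-\delta)})$. Using $H(G) \leq \log_2 \chi(G) = \log_2 h(Q_n)$, it is enough to prove $h(Q_n) \leq n \cdot 2^{-(1-\delta)}$, equivalently $h(Q_n) \leq n/2^{1-\delta}$. This is where the edge-count hypothesis enters: a chain of length $h = h(Q_n)$ forces $\binom{h}{2}$ comparable pairs, so $\comp(Q_n) \geq \binom{h}{2}$; combined with $\comp(Q_n) \leq \delta \binom{n}{2}$ we get $\binom{h}{2} \leq \delta\binom{n}{2}$, hence roughly $h \leq \sqrt{\delta}\, n$. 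So it suffices to check $\sqrt{\delta} \leq 2^{-(1-\delta)}$ for all $\delta \in (0,1)$ — and indeed at $\delta = 1$ both sides equal $1$, and one verifies the inequality holds on the whole interval by an elementary one-variable argument (e.g. taking logarithms and checking the function $\tfrac12\ln\delta + (1-\delta)\ln 2$ is $\leq 0$, which follows since $\ln \delta \leq \delta - 1 \leq (\delta-1)/\ln 2 \cdot \ln 2$, giving $\tfrac12 \ln\delta \leq \tfrac12(\delta-1) \leq (\delta-1)\ln 2$ as $\ln 2 < 1$... one must be a little careful with signs since $\delta - 1 < 0$, but $\tfrac12 \geq \ln 2$ fails, so instead use $\ln\delta \leq \delta-1$ and $-(1-\delta)\ln 2 = (\delta-1)\ln 2$, needing $\tfrac12(\delta-1) \leq (\delta-1)\ln 2$, which holds iff $\tfrac12 \geq \ln 2$, true).

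Putting this together: from $\comp(Q_n) \leq \delta\binom{n}{2}$ we deduce $h(Q_n) \leq \sqrt{\delta}\,n \leq 2^{-(1-\delta)} n$ (handling the integer rounding in $\binom{h}{2}$, which only helps), hence $\chi(G) = h(Q_n) \leq 2^{-(1-\delta)}n$, hence $H(G) \leq \log_2 \chi(G) \leq \log_2 n - (1-\delta)$, hence $H(\overline{G}) = \log_2 n - H(G) \geq 1-\delta$, and finally $e(Q_n) \geq 2^{\frac12 n H(\overline{G})} \geq 2^{\frac12(1-\delta)n}$ by~\eqref{eqn.Cardinal}. The only mild subtlety — and the step I would expect to need the most care — is the elementary inequality $\sqrt{\delta} \leq 2^{-(1-\delta)}$ on $(0,1)$ and making sure the floor/ceiling issues in translating $\binom{h}{2} \leq \delta\binom{n}{2}$ into a clean bound on $h$ go the favourable way; both are routine but worth stating explicitly. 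An alternative, avoiding entropy altogether, would be to bound $h(Q_n)$ as above and then invoke~\eqref{eqn.chain2b}, $e(Q_n) \geq n!/h(Q_n)^n$, but this gives a bound of the form $n!/(\sqrt{\delta}n)^n \approx (e\sqrt{\delta})^{-n}\cdot(\text{poly})$ which is weaker (indeed often less than $1$) than what~\eqref{eqn.Cardinal} yields, so the entropy route is the one to take.
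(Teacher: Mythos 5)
There is a genuine gap: the elementary inequality $\sqrt{\delta} \leq 2^{-(1-\delta)}$, on which your whole argument rests, is false for every $\delta \in (\tfrac12,1)$. Writing it as $\tfrac12 \log_2 \delta \leq \delta - 1$ and setting $g(\delta) = \tfrac12\log_2\delta - (\delta-1)$, one has $g(\tfrac12)=g(1)=0$ and $g$ strictly concave, so $g>0$ on all of $(\tfrac12,1)$; concretely, at $\delta=\tfrac34$ you get $\sqrt{3/4}\approx 0.866 > 0.841 \approx 2^{-1/4}$. Your own verification actually records the correct obstruction (``$\tfrac12 \geq \ln 2$ fails'') and then concludes the opposite (``which holds iff $\tfrac12\geq\ln 2$, true''); since $\ln 2 \approx 0.693 > \tfrac12$ and $\delta-1<0$, the step $\tfrac12(\delta-1)\leq(\delta-1)\ln 2$ goes the wrong way. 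The failure is not patchable within your scheme: the height really can be as large as about $\sqrt{\delta}\,n$ (a single chain on roughly $\sqrt{\delta}\,n$ points plus isolated points uses up only about $\delta\binom{n}{2}$ comparable pairs), so $H(G)\leq\log_2\chi(G)=\log_2 h(Q_n)$ can only yield $H(\overline{G})\geq -\tfrac12\log_2\delta$, and $2^{-\frac14 n\log_2\delta}=\delta^{-n/4}$ is strictly weaker than the required $2^{\frac12(1-\delta)n}$ as $\delta\to 1$. This is exactly the regime the lemma is needed for (it supplies the $2^{\frac12(1-\delta)}$ term in $c_3(\delta)$). A secondary slip: the rounding in $\binom{h}{2}\leq\delta\binom{n}{2}$ gives only $h\leq\sqrt{\delta}\,n+1$, not $h\leq\sqrt{\delta}\,n$; it does not ``only help''.

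For contrast, the paper's proof avoids entropy altogether. It derives the (weaker, but linear in $\delta$) height bound $h(Q_n)<(1+\delta)n/2$ from $\binom{h}{2}\leq\delta\binom{n}{2}$, and then applies the level decomposition: if level $i$ has $r_i$ elements, then $e(Q_n)\geq\prod_i r_i!\geq\prod_i 2^{r_i-1}=2^{n-h}>2^{(1-\delta)n/2}$, the only ingredient being $r!\geq 2^{r-1}$. If you want to salvage your write-up, replacing the entropy machinery by this direct use of~\eqref{eqn.chain2a} on the levels is the natural fix; the step $H(G)\leq\log_2\chi(G)$ is simply too lossy here.
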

\noindent
Note that $1-\sqrt{\delta} > \frac12(1-\delta)$, and $1-\sqrt{\delta} = \frac12(1-\delta) +O((1-\delta)^2)$ as $\delta \to 1$.   Thus the first term in the maximum above is slightly better in terms of $\delta$.  The second term is the one we use to define $c_3$ in~(\ref{eqn.c3def}) below, where we need the factor $n$ not $(n-1)$ in the exponent.
\begin{proof}
For both of the terms in the maximum, we will use inequality~\eqref{eqn.height-elb}, together with an upper bound on the height $h=h(Q_n)$ following from $\binom{h}{2} \leq \delta \binom{n}{2}$.
Note first that $h<\sqrt{\delta}(n-1)+1$; for otherwise
\[  \binom{h}{2} \geq \tfrac12 (\sqrt{\delta}(n-1)+1) \sqrt{\delta}(n-1)
> \tfrac12 (\sqrt{\delta} n) \sqrt{\delta}(n-1) = \delta \binom{n}{2}.
\]
Thus by~(\ref{eqn.height-elb}) we have $e(Q_n) >  2^{(1-\sqrt{\delta})(n-1)}$.

It remains to show that
$ e(Q_n) \geq 2^{\frac12(1-\delta)n}$.
Since $\delta < 1$ we have $e(Q_n) \geq 2$, so we may assume that $(1 - \delta)n > 2$.
We {\bf claim} that $h< (1+\delta)n/2$: once we have proved this, the required lower bound on $e(Q_n)$ will follow directly from~(\ref{eqn.height-elb}). 
But if $h \geq (1+\delta)n/2$, then 
$\delta \binom{n}{2} \geq \binom{h}{2} \geq \frac{1}{8} (1+\delta)n \, ((1+\delta)n-2)$.  
This may be rewritten as
\[ n \delta^2 -2(n-1)\delta +n-2 \leq 0;\]
and this quadratic inequality gives $(1-\delta) n \leq 2$, which contradicts $(1-\delta)n > 2$. 
This completes the proof of the claim, and thus of the lemma. 
\end{proof}

We may see quickly that $f^-(n, \delta)^{1/n} \to \infty$ when $n \to \infty$ and $\delta \to 0$, as required   
in part (b) of Theorem~\ref{thm.main}.  
We noted in the proof of the last lemma that the height $h=h(Q_n)$ satisfies
$h<\sqrt{\delta}(n-1)+1$, so $h<\sqrt{\delta}n+1$.
Hence, by inequality~\eqref{eqn.chain2b} (and using $n! \geq (n/e)^n$), 
\[ e(Q_n)^{1/n} \geq \frac{n}{e(\delta^{\frac12}n+1)} =
\frac1{e}  \, \delta^{-\frac12} \left (1+\tfrac{1}{\delta^{\frac12}n} \right)^{-1}.\]
But, if $n \geq 11 \delta^{-\frac12}$, then
\[ \left(1+\tfrac{1}{\delta^{\frac12}n}\right)^{-1} \geq 1-\tfrac{1}{\delta^{\frac12}n} \geq 
\tfrac{10}{11} > \tfrac{e}{3}.\]  
Hence, for each $0<\delta \leq 1$,
\[ f^-(n,\delta)^{1/n} 
\geq \tfrac1{3}  \delta^{-\frac12} \;\;\; \mbox{ for all } n \geq 11 \delta^{-\frac12}.\]
This gives a lower bound as required on $f^-(n,\delta)^{1/n}$; 
but we can obtain better lower bounds for this case
in terms of the dependence on $\delta$. In the bound in Lemma~\ref{lem.low} 
below we essentially replace $\delta^{-\frac12}$ by $\delta^{-1}$.

We first recall a preparatory lemma.  A proof of this can be found in for example~\cite{bw} 
Theorem 6.3 -- the result is also stated as exercise 57 to Chapter 3 of~\cite{stan} 
and the equality part to exercise 20 in~\cite[subsection 5.1.4]{Knu}.  

\begin{lem}
\label{lem.prel}
Let $Q_{n}$ be a poset on $n$ vertices, and for each $t \in Q_n$ let 
$\lambda_t:= \vert \{s \in Q_n: s \preceq t \}\vert$ be the size of the 
principal downset for $t$ (often referred to as the hook length). Then 
\[
e(Q_n) \geq \dfrac{n!}{\prod_{t \in Q_n} \lambda_{t}},
\]
with equality if and only if $Q_n$ is a downward-branching forest. 
\end{lem}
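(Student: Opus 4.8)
The plan is to prove the inequality by induction on $n$, peeling off a maximal element, and then to obtain the equality statement by recording when each step of the induction is tight. The case $n=1$ is immediate. For $n\ge 2$ let $M$ be the set of maximal elements of $Q_n$. Since the last point of any linear extension of $Q_n$ is maximal, and conversely any linear extension of the induced subposet $Q_n-x$ (on the remaining $n-1$ points) extends uniquely to one of $Q_n$ by appending the maximal element $x$ at the end, we have $e(Q_n)=\sum_{x\in M}e(Q_n-x)$. The key observation is that deleting a \emph{maximal} element leaves every other principal downset unchanged: if $x$ is maximal and $t\ne x$ then $x\not\preceq t$, so $\lambda_t$ is the same in $Q_n-x$ as in $Q_n$, and hence $\prod_{t\in Q_n-x}\lambda_t=\bigl(\prod_{t\in Q_n}\lambda_t\bigr)/\lambda_x$. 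Applying the induction hypothesis to each $Q_n-x$ and summing,
\[
e(Q_n)\ \ge\ \sum_{x\in M}\frac{(n-1)!\,\lambda_x}{\prod_{t\in Q_n}\lambda_t}\ =\ \frac{(n-1)!}{\prod_{t\in Q_n}\lambda_t}\sum_{x\in M}\lambda_x\ \ge\ \frac{n!}{\prod_{t\in Q_n}\lambda_t},
\]
where the final inequality holds because $\sum_{x\in M}\lambda_x=\bigl|\{(s,x):x\in M,\ s\preceq x\}\bigr|\ge n$, as every point lies below at least one maximal element.

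For the equality case I would trace back these two inequalities. Equality forces, first, $\sum_{x\in M}\lambda_x=n$, i.e.\ the downsets $\{s:s\preceq x\}$ over $x\in M$ partition $Q_n$ — equivalently, every point lies below a \emph{unique} maximal element — and, second, equality in the induction hypothesis for every $Q_n-x$. The natural way to package this is to first prove the elementary claim that $\sum_{x\in M}\lambda_x=n$ holds if and only if each component of $Q_n$ has a single maximal element, and then to run the induction carrying this component structure as an invariant: deleting the unique maximal element of a component leaves, below each of its lower covers, a poset of the same kind. Unwinding the recursion pins down the extremal posets as those each of whose components is a tree with its unique maximal element as root (equivalently, each non-maximal element is covered by exactly one element); applying the same reasoning to the order-reversed poset, which has the same number of linear extensions, gives the dual description in terms of a single minimal element per component.

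The two displayed estimates are routine; the step that needs care is the equality analysis, namely checking that the tightness conditions coming from \emph{both} inequalities propagate correctly through the recursion and together match the stated component description. I expect this bookkeeping — and in particular the elementary lemma relating $\sum_{x\in M}\lambda_x$ to the number of maximal elements per component — to be the only real obstacle; the rest is a clean induction.
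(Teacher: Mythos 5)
The paper never actually proves this lemma -- it is quoted from the cited sources (Bj\"orner--Wachs, Stanley, Knuth) -- so your self-contained induction is necessarily a different route, and for the \emph{inequality} it is correct and complete: the recursion $e(Q_n)=\sum_{x\in M}e(Q_n-x)$ over the set $M$ of maximal elements, the observation that deleting a maximal element leaves every other principal downset (and hence every other $\lambda_t$) unchanged, and the bound $\sum_{x\in M}\lambda_x\ge n$ (every point lies below at least one maximal element) fit together exactly as you describe. Nothing is missing there.

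The equality analysis is where there is a genuine problem, and working through your plan exposes an issue with the statement itself. Your induction characterises equality by (i) the downsets of the maximal elements partitioning $Q_n$ (equivalently, one maximal element per component) together with (ii) equality persisting after deleting each maximal element; unwound, as you say, this means every element is covered by at most one element, i.e.\ each component is a tree rooted at its unique maximal element. That forest condition is the correct characterisation, and it is \emph{strictly stronger} than ``one maximal element per component'': the diamond $a\prec b\prec d$, $a\prec c\prec d$ (with $b,c$ incomparable) has a unique maximal and a unique minimal element, yet $e=2$ while $n!/\prod_t\lambda_t=24/16=3/2$, so the condition printed in the lemma is not sufficient for equality. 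Your closing duality step is the concrete gap: reversing the order preserves $e(Q_n)$ but turns downsets into upsets, so $\prod_t\lambda_t$ is not preserved and equality for the downset-product bound does not transfer to the order-reversed poset. Indeed $a\prec b$, $a\prec c$ has a unique minimal element but $e=2>3!/(1\cdot2\cdot2)=3/2$. So the ``or exactly one minimal element'' clause cannot be reached by your argument, and the sufficiency direction of the stated equivalence is simply false; what your induction genuinely proves is the forest characterisation, which is the version appearing in the cited references. Since the paper only ever uses the inequality, nothing downstream is affected, but the equality clause you are asked to prove should be replaced by the one your recursion delivers.
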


\begin{lem}
\label{lem.low}
Let $0 < \delta \leq 1$ and let $Q_n$ be a poset on $n$ vertices with 
$\comp(Q_n) \leq \delta \binom{n}{2}$. Then 
\[
e(Q_n) \geq n! \left(\dfrac{2}{\delta(n-1)+2}\right)^{n} \geq  
e^{1-2/\delta} \left(\dfrac{2}{e\delta}\right)^{n}.
\]
\end{lem}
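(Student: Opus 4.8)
The plan is to apply Lemma~\ref{lem.prel} (the hook-length inequality) together with the arithmetic--geometric means inequality, exactly mirroring the structure of the proof of Lemma~\ref{lem.maxub} but working with downsets instead of back-degrees. First I would note that for each $t \in Q_n$, the hook length $\lambda_t = |\{s : s \preceq t\}|$ equals $1$ plus the number of elements strictly below $t$, and each such element is comparable to $t$; hence $\sum_{t \in Q_n} (\lambda_t - 1) \leq \comp(Q_n) \leq \delta\binom{n}{2}$, since each comparable pair $\{s,t\}$ with $s \prec t$ contributes to the count $\lambda_t - 1$ for the larger element $t$ only (so in fact the sum over all $t$ of $(\lambda_t-1)$ counts each comparable pair at most once). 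Therefore $\sum_{t} \lambda_t \leq \delta\binom{n}{2} + n = n\bigl(\tfrac{\delta(n-1)}{2} + 1\bigr)$.

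Next I would combine Lemma~\ref{lem.prel}, which gives $e(Q_n) \geq n! / \prod_{t} \lambda_t$, with the AM--GM inequality $\prod_{t} \lambda_t \leq \bigl(\tfrac1n \sum_t \lambda_t\bigr)^n$. Plugging in the bound on $\sum_t \lambda_t$ yields
\[
e(Q_n) \geq \frac{n!}{\left(\tfrac{\delta(n-1)}{2} + 1\right)^n} = n! \left(\frac{2}{\delta(n-1)+2}\right)^n,
\]
which is the first claimed inequality. For the second inequality, I would write $\delta(n-1) + 2 = \delta n + (2 - \delta) \leq \delta n + 2$, so that $\left(\tfrac{2}{\delta(n-1)+2}\right)^n \geq \left(\tfrac{2}{\delta n + 2}\right)^n = \left(\tfrac{2}{\delta n}\right)^n\bigl(1 + \tfrac{2}{\delta n}\bigr)^{-n} \geq \left(\tfrac{2}{\delta n}\right)^n e^{-2/\delta}$, using $(1+x)^{-n} \geq e^{-nx}$ with $x = \tfrac{2}{\delta n}$. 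Then applying $n! \geq (n/e)^n$ gives $e(Q_n) \geq (n/e)^n \left(\tfrac{2}{\delta n}\right)^n e^{-2/\delta} = \left(\tfrac{2}{e\delta}\right)^n e^{-2/\delta}$. To get the stated constant $e^{1 - 2/\delta}$ rather than $e^{-2/\delta}$, I would instead use the slightly sharper Stirling bound $n! \geq \sqrt{2\pi n}(n/e)^n \geq e(n/e)^n$ (valid for $n \geq 1$ since $\sqrt{2\pi n} \geq e$ already for $n \geq 2$, and trivially for $n=1$), contributing the extra factor $e$.

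I expect the only real point requiring care to be the double-counting bookkeeping in the first step: one must be sure that $\sum_{t}(\lambda_t - 1)$ does not exceed $\comp(Q_n)$ — and indeed it does not, because each comparable pair $\{s,t\}$ with $s \prec t$ is counted exactly once, in the term for $t$. (A cruder bound $\sum_t(\lambda_t-1) \leq 2\comp(Q_n)$ would lose a factor of $2$ and not give the stated estimate, so this observation is essential.) Everything else is routine manipulation of elementary inequalities, so the argument is short.
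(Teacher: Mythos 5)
Your proposal is correct and follows essentially the same route as the paper: Lemma~\ref{lem.prel} combined with the identity $\sum_t \lambda_t = \comp(Q_n)+n$ and the AM--GM inequality. The only (harmless) difference is in the final elementary step, where the paper keeps the term $2-\delta$ and uses $\bigl(1+\tfrac{2/\delta-1}{n}\bigr)^{-n}\geq e^{1-2/\delta}$ with just $n!\geq (n/e)^n$, whereas you round up to $\delta n+2$ and recover the lost factor $e$ via the sharper Stirling bound $n!\geq \sqrt{2\pi n}\,(n/e)^n$.
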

\begin{proof}
With notation as in the last lemma,
\begin{equation*}
\sum_{t \in Q_n} \lambda_{t}=\comp(Q_n)+n,
\end{equation*} 
and by the arithmetic-geometric means inequality 
\[
\left(\frac{\sum_{t \in Q_n} \lambda_{t}}{n} \right)^{n} \geq \prod_{t \in Q_n} \lambda_{t}.
\]
Hence, by Lemma~\ref{lem.prel}
\begin{eqnarray*}
e(Q_n) & \geq & \dfrac{n!}{\prod_{t \in Q_n} \lambda_{t}} 
\;\; \geq \;\; n!\left(\dfrac{n}{\sum_{t \in Q_n} \lambda_{t}}\right)^{n} \\
& = & n! \left(\dfrac{n}{n+\comp(Q_n)}\right)^{n} 
\;\: \geq \;\; n! \left(\dfrac{n}{n+ \delta \binom{n}{2}}\right)^{n} \\
& = & n! \left(\dfrac{2}{\delta(n-1)+2}\right)^{n},
\end{eqnarray*} 
which gives the first inequality required.
Using $n! \geq \left(\frac{n}{e} \right)^{n}$ and $1+x \leq e^{x}$, we see that the last term is at least
\begin{eqnarray}
\label{eqn.simplified}
\left(\dfrac{n}{e}\right)^{n} \left(\dfrac{2}{\delta (n-1)+2}\right)^{n} \nonumber 
& = & \left(\dfrac{2}{e\delta}\right)^{n} \left(1+\dfrac{\frac{2}{\delta}-1}{n} \right)^{-n} \nonumber 
\; \geq \; \left(\dfrac{2}{e\delta}\right)^{n} e^{1-2/\delta},
\end{eqnarray}
which completes the proof.
\end{proof}
By the last lemma, if $n \geq 6/\delta$ then
\[ f^-(n,\delta)^{1/n} \geq e^{-\tfrac{2}{\delta n}} \dfrac{2}{e\delta} 
\geq e^{-\tfrac13} \dfrac{2}{e\delta}  \geq \dfrac{1}{2\delta}.\]
Thus, using also Lemma~\ref{lem.half}, we see that we may set 
\begin{equation}\label{eqn.c3def}
c_3(\delta) = \max \left \{ 2^{\frac12(1-\delta)} \, , \: \frac{1}{2\delta} \right \}.
\end{equation}

Next we consider the very sparse case. By Lemma~\ref{lem.low}, if $\comp(Q_n)=j$ 
(so the density $\delta=\delta(Q_n)$ satisfies $\delta(n\!-\!1)/2= j/n$) then  
\begin{equation} \label{eqn.vsparse}
e(Q_n) \geq n! \, (1+\delta(n\!-\!1)/2)^{-n} = n! \, (1+j/n)^{-n} \geq n! \, e^{-j}.
\end{equation}
If $\comp(Q_n)=j \leq n/2$, then we can have $e(Q_n)= n! \, 2^{-j}$ (when the comparability graph is a matching).  
Inequality~\eqref{eqn.vsparse} is sufficient to prove Proposition~\ref{prop.minusrate} 
below, but we can improve it, and obtain the equality~\eqref{eqn.vsparse2} which matches~\eqref{eqn.vdense} nicely.  
We need a preliminary result, perhaps of interest in its own right. 
\begin{prop} \label{lem.vsparse}
For each partial order $Q$ on $[n]$ we have
\[ e(Q) \geq n! \, 2^{-\comp(Q)}, \]
and equality holds if and only if the comparability graph is a (partial) matching.  
\end{prop}

\begin{proof} 
Let us think of a partial order $Q$ as the set of ordered pairs $(u,v)$ of distinct elements such that $u \prec v$ in $Q$.  Observe that $\comp(Q)=|Q|$.
Suppose that $|Q|=k$. There is an increasing sequence $Q_0, Q_1, \ldots, Q_k$ of partial orders on $[n]$ such that $|Q_i|=i$ for $i=0, \ldots, k$ and $Q_k=Q$. 
Thus the inequality in the proposition will follow from the next claim.  

\begin{cl-non-num}
Let $Q$ and $Q'$ be partial orders on $[n]$ such that $Q'=Q \cup \{(u,v)\}$ where $(u,v) \not\in Q$.
Then $e(Q) \leq 2 e(Q')$.
\end{cl-non-num}

To prove the claim, note first that $(x, u) \in Q'$ implies $(x, v) \in Q$ (since $(x, v) \in Q'$ by transitivity, and $(x,v) \neq (u,v)$), and similarly $(v, y) \in Q'$ implies $(u, y) \in Q$. Let $LE(Q)$ denote the set of linear extensions of $Q$. 
Let $\pi \in LE(Q)$ with $\pi(u)>\pi(v)$, and let $\pi'$ be formed from $\pi$ 
by swapping $u$ and $v$: it suffices to show that $\pi' \in LE(Q')$.  
Consider $u$ first.
If $(x, u) \in Q'$ then, since $(x, v) \in Q$ (as we noted), we have $\pi'(x) = \pi(x) < \pi(v) = \pi'(u)$; $\pi'(u) < \pi'(v)$; and if $(u, y) \in Q'$ for some $y \neq v$  then $(u,y) \in Q$ and so $\pi'(u) < \pi(u) < \pi(y) = \pi'(y)$.
Considering $v$ similarly, we see that $(x, v) \in Q'$ implies $\pi'(x) < \pi'(v)$ and $(v, y) \in Q'$ implies $\pi'(v) < \pi'(y)$.
It follows that $\pi' \in LE(Q')$, as required.  

Finally, consider when equality holds.  By symmetry, we must have equality when the comparability graph is a matching.  
Now suppose it is not a matching, 
and note that we may choose $Q_2$ to consist of two comparable pairs $(u,x), (v,x)$ or $(x,u), (x,v)$.  
But now $e(Q_2)=n! /3$; and so, arguing as before,
\[ e(Q) \geq (4/3) \, n! \,  2^{-\comp(Q)} > n! \,  2^{-\comp(Q)}, \]
which completes the proof.
\end{proof}
By the last result and the discussion preceding it we have    
\begin{equation} \label{eqn.vsparse2}
f^-(n,\delta) = n! \, 2^{-\lceil \delta \binom{n}{2}\rceil} \;\;\; \mbox{ when } \;\; \delta (n-1) \leq 1.
\end{equation}

Now we determine the rate of growth of $f^-(n,\delta)^{1/n}$ as $\delta=\delta(n) \to 0$. 
\begin{prop} \label{prop.minusrate}
Let $\delta=\delta(n) \to 0$ as $n \to \infty$.  Then
\[ f^-(n,\delta)^{1/n} = \Theta(\min \{\delta^{-1}, n\}). \]
\end{prop}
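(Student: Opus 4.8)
The plan is to establish matching upper and lower bounds on $f^-(n,\delta)^{1/n}$, each of the form $\Theta(\min\{\delta^{-1},n\})$, by splitting into the two regimes (a) $\delta^{-1}\le n$, equivalently $\delta\ge 1/n$, and (b) $\delta^{-1}\ge n$, equivalently $\delta\le 1/n$; in case (b) we have $\min\{\delta^{-1},n\}=n$, while in case (a) it equals $\delta^{-1}$. Throughout we may assume $\delta=\delta(n)\to 0$, so all ``for $n$ sufficiently large'' caveats in the lemmas quoted below are satisfied (e.g.\ $n\ge 6/\delta$ fails in case (b) but there we argue separately).

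First I would handle case (a). For the upper bound, inequality~\eqref{eqn.fminusub1} gives $f^-(n,\delta)^{1/n}\le 2\delta^{-1}=O(\delta^{-1})$ directly. For the lower bound, Lemma~\ref{lem.low} gives $f^-(n,\delta)^{1/n}\ge e^{1-2/\delta}(2/(e\delta))^n$—but more usefully, the discussion right after Lemma~\ref{lem.low} shows that for $n\ge 6/\delta$ we have $f^-(n,\delta)^{1/n}\ge \frac{1}{2\delta}$, which is $\Omega(\delta^{-1})$. Since in case (a) $\delta\le 1/n$ fails, i.e.\ $\delta> 1/n$ — hmm, one must be a little careful: $n\ge 6/\delta$ need not hold merely from $\delta\ge 1/n$. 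So in case (a) I would further split at $\delta\ge 6/n$ versus $1/n\le\delta<6/n$; in the latter thin strip $\delta^{-1}=\Theta(n)$, so it can be absorbed into the treatment of case (b) via a crude bound. Concretely, for $1/n\le\delta$, the weaker bound $f^-(n,\delta)^{1/n}\ge \tfrac13\delta^{-1/2}$ (valid for $n\ge 11\delta^{-1/2}$, which holds when $\delta$ is not too small — in particular in the strip $\delta\ge 1/n$ it gives $\Omega(\sqrt n)$) is not quite enough, so I would instead note that in the strip $1/n\le\delta<6/n$ we may just invoke Lemma~\ref{lem.half}, or better, use Proposition~\ref{lem.vsparse}: $f^-(n,\delta)\ge n!\,2^{-\lceil\delta\binom n2\rceil}\ge n!\,2^{-O(n)}$, giving $f^-(n,\delta)^{1/n}=\Theta(n)=\Theta(\delta^{-1})$ there.

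Next, case (b): $\delta\le 1/n$, so $\delta\binom n2\le n/2-1<1$ once... wait, $\delta(n-1)\le 1$ is exactly the hypothesis of~\eqref{eqn.vsparse2}. So when $\delta(n-1)\le 1$, equation~\eqref{eqn.vsparse2} gives the exact value $f^-(n,\delta)=n!\,2^{-\lceil\delta\binom n2\rceil}$. Since $\delta\binom n2\le n/2$, this is at least $n!\,2^{-n/2}$, whence $f^-(n,\delta)^{1/n}\ge (n!)^{1/n}2^{-1/2}=\Theta(n)$ by Stirling ($n!\ge(n/e)^n$); and trivially $f^-(n,\delta)^{1/n}\le (n!)^{1/n}=O(n)$ always. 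Thus $f^-(n,\delta)^{1/n}=\Theta(n)=\Theta(\min\{\delta^{-1},n\})$ in case (b).

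Finally I would assemble the pieces: in the main regime $\delta\ge 6/n$ we get $\Theta(\delta^{-1})=\Theta(\min\{\delta^{-1},n\})$ from~\eqref{eqn.fminusub1} and the post-Lemma~\ref{lem.low} lower bound; in the transitional regime $1/n\le\delta<6/n$ and in the regime $\delta<1/n$ we get $\Theta(n)=\Theta(\min\{\delta^{-1},n\})$ from~\eqref{eqn.vsparse2}, Stirling, and the trivial bound $e(Q_n)\le n!$. The only genuine subtlety—the ``main obstacle''—is bookkeeping around the crossover point $\delta\asymp 1/n$: making sure the hypotheses ``$n\ge 6/\delta$'' of Lemma~\ref{lem.low} and ``$\delta(n-1)\le 1$'' of~\eqref{eqn.vsparse2} between them cover all $n$ (they leave a gap $1/(n-1)<\delta<6/n$ that must be bridged by a separate, cruder estimate such as Proposition~\ref{lem.vsparse} or Lemma~\ref{lem.half}), and checking that all constants there are absolute. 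There is no deep idea required beyond what is already proved; it is purely a matter of patching the ranges together cleanly.
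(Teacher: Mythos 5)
Your proof is correct and follows essentially the same route as the paper: split at $\delta=1/n$, use~\eqref{eqn.fminusub1} and Lemma~\ref{lem.low} for $\delta\geq 1/n$, and the very-sparse bound together with $e(Q_n)\leq n!$ for $\delta\leq 1/n$. Two minor remarks: the extra sub-case $1/n\leq\delta<6/n$ is unnecessary, since the first inequality of Lemma~\ref{lem.low}, $e(Q_n)\geq n!\left(\frac{2}{\delta(n-1)+2}\right)^n$, already gives $f^-(n,\delta)^{1/n}\geq\frac{n}{e}\cdot\frac{2}{\delta n+2}\geq\frac{2}{3e}\delta^{-1}$ whenever $\delta n\geq 1$; and your parenthetical fallback to Lemma~\ref{lem.half} in that strip would not suffice (it yields only $2^{\Omega(n)}$, not $n!\,2^{-O(n)}$), though the argument via Proposition~\ref{lem.vsparse} that you actually carry out is fine.
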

\begin{proof}
It suffices to consider the cases (a) $\delta \geq 1/n$ and (b)  $0<\delta \leq 1/n$.
In case (a), the upper bound from~\eqref{eqn.fminusub1} and the lower bound from Lemma~\ref{lem.low} show that
$f^-(n,\delta)^{1/n} = \Theta(\delta^{-1})$.
In case (b),  let $j= \lceil \delta \binom{n}{2} \rceil$, and note that $j \leq n/2$.
Hence if $\comp(Q_n) \leq j$ then by~\eqref{eqn.vsparse} or Proposition~\ref{lem.vsparse}
\[ e(Q_n) \geq n! \, e^{-j} \geq n! \, e^{-n/2}.\]
Thus
\[n \geq (n!)^{1/n} \geq f^-(n,\delta)^{1/n} \geq (n!)^{1/n} e^{-\tfrac12} \geq n e^{-\tfrac32},\]
and so $f^-(n,\delta)^{1/n}=\Theta(n)$, as required.
\end{proof}

\subsection*{Example: the lattice of subsets of $[t]$}

An interesting example of a poset is the lattice of $n=2^t$ subsets of $[t]$, 
ordered by inclusion. Let us call this poset $L_n$. 
It is easy to see that there are $3^t$ pairs $A,B$ with $A \subseteq B \subseteq [t]$, 
and it follows that $\comp(L_n)= 3^t-2^t=n^2 (\tfrac34)^t -n$: thus  
\[\delta(n):= \frac{\comp(L_n)}{\binom{n}{2}} \sim 2 \cdot \left(\frac34 \right)^{\log_2 n} = 2 n^{-\log_2 (4/3)} = o(1). \]
(Note that $\log_2(4/3) \approx 0.415$.)
Brightwell and Tetali~\cite{Brigh}, improving on~\cite{SK}, give a very 
good estimate of $e(L_n)$, which by Stirling's formula
implies that
\[ ( e(L_n)/n!)^{1/n} \sim ((\pi e/2) \log_2 n)^{-1/2}. \]
Thus we are not close to having as many linear extensions as possible 
(given $\delta=\delta(n)$), but closer than to the opposite case of having as few as possible. 
Indeed, as $n \to \infty$, by part (a) of Theorem~\ref{thm.main}, $( f^+(n,\delta)/n!)^{1/n} \to 1$, 
and by Proposition~\ref{prop.minusrate}, $f^-(n,\delta)^{1/n} = \Theta( \delta^{-1}) =\Theta ( n^{\log_2(4/3)})$.

\medskip

Let us close this section by considering briefly whether the lower bounds~\eqref{eqn.KK} 
and~\eqref{eqn.Cardinal} on $e(Q_n)$ based on entropy can yield good lower bounds on $f^-(n,\delta)$. 
The first lower bound $n! \, 2^{-n H({G})}$ tells us nothing when $\delta$ is at least about $\frac12$.
For let $Q_n$ be the balanced chain example $\bC(n,2)$, with comparability graph $G$. 
Then $\comp(Q_n) \sim  \frac{1}{2} \binom{n}{2}$. Also $\chi(\overline{G})=2$, 
so $H(\overline{G}) \leq 1$ and thus $H(G) \geq \log_{2} (n/2)$. Hence
\[ n! \, 2^{-n H(G)}  \leq   n! \, 2^{-n \log_2 (n/2)} \; = n! \, (\tfrac{2}{n})^n  = o(1).\]

Now consider smaller $\delta$, say $\delta=1/k$ where $k \geq 3$.
Consider the balanced chain example $\bC(n,k)$. 
Arguing as above we have $\chi(\overline{G})=k$, so $H(G) \geq \log_2(n/k)$.  Hence
\[ n! \, 2^{-n H(G)}  \leq   n! \, 2^{-n \log_2 (n/k)} \; = n! \, (\tfrac{k}{n})^n  = \left( \tfrac{1+o(1)}{e \delta} \right)^n. \]
Thus we do not quite match the bound $(\tfrac{1}{2\delta})^n$ above. 

The second lower bound~\eqref{eqn.Cardinal}, namely $2^{\frac{1}{2} n H(\overline{G})}$, does give a lower bound of the form $2^{\Omega(n)}$ for each $0<\delta<1$.  For if $\comp(Q_n) \geq \delta \binom{n}{2}$, 
then as we have seen $e(Q_n) \geq c_3 (\delta)^n$ for $n$ sufficiently large, and so 
for the comparability graph $G$ we have by the upper bound in~\eqref{eqn.KK} that
\[ 2^{\frac{1}{2} n H(\overline{G})} \geq e(Q_n)^{\frac12} \geq (c_3(\delta)^{\frac12})^n. \]
In particular this gives
\[ f^-(n,\delta)^{1/n} \geq 2^{\frac{1}{2} H(\overline{G})} = \Omega(\delta^{-\frac12}) \;\; \mbox{ as } \delta \to 0. \]

However, using~\eqref{eqn.Cardinal} we do not obtain $f^-(n,\delta)^{1/n} = \Omega(\delta^{-1})$ as $\delta \to 0$.
For, let $k=\lceil \delta^{-1} \rceil$, and let $Q_n$ be the balanced chain 
example $\bC(n,k)$ (as at the start of this section), with comparability graph $G$.  
Recall that $\comp(Q_n) \leq \delta \binom{n}{2}$, and 
$H(\overline{G}) \leq \log_{2} k$. 
Thus, 
\[ 2^{\frac12 H(\overline{G})} \leq k^{\frac12} = {\lceil \delta^{-1} \rceil}^{\frac12} < 2 \delta^{-\frac12}. \]

\section{Random interval partial orders}
\label{sec.randint}

Here we examine random interval posets, see~\cite{fishburn}.
Given a family of $n$ closed intervals $I_j=[a_j,b_j]$ in the real line for $j = 1, \ldots, n$, 
we form an \emph{interval partial order} on $\{1, \ldots, n\}$ by setting $i \prec j$ if and only if $b_i < a_j$. 
Thus two elements of the poset are comparable when their corresponding intervals 
do not intersect, and otherwise they are incomparable. A chain of the poset 
is a set of pairwise non-intersecting intervals; and an antichain is a 
set of pairwise intersecting intervals (that is, by Helly's Theorem, a set of intervals containing a common point).

Suppose that we start with $2n$ independent random variables $X_{1},\ldots, X_{n}$ and 
$Y_{1},\ldots, Y_{n}$, each from the uniform distribution on $[0,1]$; and form $n$ closed 
intervals $I_{j}$, for $j = 1,\ldots,n$, where $I_{j}=[X_{j}, Y_{j}]$ 
if $X_{j}<Y_{j}$ and $[Y_{j},X_{j}]$ otherwise. 
(The event that $X_{j}=Y_{j}$
has probability zero so can be ignored.) These intervals $I_j$ yield a 
\emph{random interval poset} $P_n$. It would not matter if we 
replaced the uniform distribution on $[0,1]$ by any continuous distribution.  
Also equivalently, we could generate the intervals from a 
uniform random perfect matching on $\{1, \ldots, 2n\}$.

It is easy to see that the probability that two random intervals are 
comparable is $1/3$ -- see e.g. \cite{jsw}.
Indeed, for $i \neq j$, the probability that the four end points of 
the intervals $I_i$ and $I_j$ come in any given order is $1/ 4! = 1/24$; 
and there are exactly $8$ orders such that the intervals are disjoint. 
It follows that the number $Z$ of edges in the comparability graph 
satisfies $\mathbb{E}[Z]=\frac{1}{3} \binom{n}{2}$.

Also, given any values $x_1, \ldots, x_{n}$, $y_1, \ldots, y_{n}$, 
as end points of $n$ intervals, changing the values of $x_k$ and $y_k$
can affect only the edges incident to node $k$ in the comparability graph, 
so the number of edges can vary by at most $n-1$. 
Hence by the bounded differences inequality (Lemma 1.2 of~\cite{McD}, 
or see for example Theorem 6.3 of~\cite{blm}), for each $t>0$,  
\[ 
\mathbb{P}(\vert Z - \mathbb{E}[Z] \vert \geq t) \leq 2 \exp \left (-\frac{2t^2}{n(n-1)^2} \right).
\]
Thus, for $0 < a < 1$
\[ 
\mathbb{P} \left (\bigg \vert Z- \frac{1}{3} \binom{n}{2} \bigg \vert 
\geq a \binom{n}{2} \right) \leq 2 \exp \bigl( - \frac{1}{2} a^2 n \bigr)
\]
and we see that a random interval order is dense {\whp}.

Again, we want to estimate how many linear extensions $P_n$ has.
Let $A_{n}$ denote the size of a largest set of intersecting intervals in our 
family of $n$ random intervals -- in the poset language, this is a maximum antichain.
It was shown in \cite{jsw} that {\whp}
$A_{n} = \frac{n}{2}+o(n)$.
But $\: e(P_n) \geq A_n!\;$ 
so by Stirling's formula, we have that {\whp}
\[
\log_2 e(P_n) \geq \log_2(\frac{n}{2}+o(n))! = (\frac{1}{2} + o(1)) n \log_2 n. 
\]
We shall refine the idea above to show that a random interval partial order has many more linear extensions.
\begin{thm}
\label{thm.randint}
The number $e(P_n)$ of linear extensions 
of a random interval partial order $P_n$ satisfies
\begin{equation*}
e(P_n) = n! \, 2^{-\Theta(n)} \;\; \whp.
\end{equation*}
\end{thm}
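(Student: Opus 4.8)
The plan is to prove the two bounds $e(P_n) \geq n! \, 2^{-O(n)}$ and $e(P_n) \leq n! \, 2^{-\Omega(n)}$ separately, both holding {\whp}. The upper bound is essentially free: we have just seen that {\whp} $\comp(P_n) \geq \frac14 \binom{n}{2}$, say (taking $a = \frac1{12}$ in the concentration bound), so by Lemma~\ref{lem.fplus-ub1} applied with $\delta = \frac14$ we get $e(P_n) \leq n! \, 2^{-cn/16}$ {\whp}. So the whole content is in the lower bound.

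For the lower bound, the idea is to exploit the special structure of the random interval order to produce a height-$O(1)$-per-unit-length decomposition. First I would partition $[0,1]$ into $m$ consecutive subintervals $J_1,\dots,J_m$ of equal length $1/m$, where $m$ is a large constant to be chosen. Say that a random interval $I_j = [a_j,b_j]$ is \emph{short} if it is contained in some single block $J_\ell$ (together possibly with its immediate neighbour, to handle boundary effects), and \emph{long} otherwise; more robustly, assign each interval the index $\ell(j)$ of the block containing its left endpoint $a_j$. The key observation is that if two intervals $I_i, I_j$ both have their left endpoints in the same block $J_\ell$ and both have length at least $1/m$ (i.e. $b_j - a_j \geq 1/m$), then they overlap, hence are incomparable. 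So if we could ensure all intervals are reasonably long, the map $j \mapsto \ell(j)$ would be a proper colouring of the \emph{comparability} graph by $m$ colours, each colour class an antichain, and then $e(P_n) \geq \prod_\ell (\text{size of colour class } \ell)!$, which by~\eqref{eqn.chain2a} or the argument for~\eqref{eqn.chain2b} is at least $n!/m^n$ provided the classes are roughly balanced — and balance holds {\whp} since left endpoints are i.i.d.\ uniform, so each block gets $(1+o(1))n/m$ of them by a Chernoff bound and a union bound over $m = O(1)$ blocks.

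The obstacle, of course, is the short intervals: a constant fraction of the $I_j$ will have length less than $1/m$. I would handle these by a two-scale or recursive argument. One clean way: delete the set $B$ of intervals of length $< 1/m$ and first bound $e(P_n)$ below in terms of the order $P_n'$ induced on the long intervals, using $e(P_n) \geq e(P_n')$ is the wrong direction, so instead build a linear extension of $P_n$ from one of $P_n'$ by inserting the short intervals — each short interval $I_j \in B$ can be placed essentially anywhere compatible, and in fact the short intervals themselves, being confined to blocks, recursively form interval orders of geometrically smaller total length. Alternatively, and more simply, observe that whatever the short intervals do, the level structure of $P_n$ has height $h(P_n) \leq h(\text{subposet on long intervals}) + |B|$ is again too weak; the right statement is that $h(P_n)$ is the size of a longest chain, i.e.\ a longest sequence of pairwise-disjoint intervals, and a longest chain of interval lengths summing to at most $1$ with each length $\geq \epsilon$ has at most $1/\epsilon$ terms — but short intervals can still form long chains. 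So the honest route is the recursive one: within each block $J_\ell$ the intervals with left endpoint in $J_\ell$ form (up to scaling) a smaller instance, and recursing $\log_2$-ly many times (each level of recursion contributing a factor $m^{n}$ overall across all blocks at that level, but applied to shrinking vertex sets) gives $e(P_n) \geq n! \, m^{-O(n)}$ after $O(\log(1/\delta))$-type bookkeeping, with the recursion terminating once blocks are so small that {\whp} they contain $O(1)$ intervals. The main work is controlling this recursion and the {\whp} statements (union bound over a growing number of blocks) uniformly; I expect that to be the delicate part, though each individual estimate is routine.

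A cleaner alternative worth mentioning: use Lemma~\ref{lem.low} directly. We need a {\whp} upper bound on $\sum_{t \in P_n} \lambda_t = \comp(P_n) + n$, but $\comp(P_n) \sim \frac13\binom n2$ is quadratic, giving only $e(P_n) \geq n! (2/(\delta n))^n$ — far too weak. So Lemma~\ref{lem.low} alone does not suffice, and neither does the generic Lemma~\ref{lem.half} (which gives only $2^{\Omega(n)}$). This is why we must use interval-order structure, namely the block decomposition above producing a colouring of the comparability graph with $O(1)$ colours on \emph{most} of the vertices. So the plan stands: reduce to the balanced-antichain bound~\eqref{eqn.chain2b} via an $O(1)$-colouring of the comparability graph restricted to ``long'' intervals in each block, handle short intervals recursively, and control all the balance/counting estimates with Chernoff bounds and union bounds over $O(\mathrm{poly}\log n)$ events. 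The hardest step is making the recursion and its probabilistic bookkeeping airtight while keeping the loss at $m^{-O(n)} = 2^{-O(n)}$.
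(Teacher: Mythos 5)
Your upper bound is exactly the paper's: the bounded-differences concentration of $\comp(P_n)$ around $\tfrac13\binom n2$ followed by Lemma~\ref{lem.fplus-ub1}, and that part is fine. Your lower-bound strategy --- cover the poset by antichains consisting of intervals sharing a common anchor point, note that these antichains are consistently ordered along the line, and apply the product-of-factorials bound behind~\eqref{eqn.chain2b} --- is also the right idea and is the paper's strategy in spirit. But as written there is a genuine gap: the whole burden falls on the recursive treatment of the short intervals, and you leave that step undone (you say yourself it is ``the hardest step''). Concretely: (i) assigning an interval to the block containing its left endpoint does not confine a short interval to that block (it may straddle a block boundary), so ``the intervals with left endpoint in $J_\ell$ form, up to scaling, a smaller instance'' is false as stated and the recursion does not literally decompose the poset; (ii) to conclude $e(P_n)\geq\prod_\ell n_\ell!$ you need not merely that each class is an antichain (a proper colouring of the comparability graph is not enough) but that the classes can be listed so that no element of a later class lies below an element of an earlier class --- true here because the classes are anchored at increasing points of $[0,1]$, but this must be stated and verified \emph{across all scales simultaneously}; and (iii) the probabilistic bookkeeping is not a union bound over $O(\mathrm{poly}\log n)$ events: at recursion depth $r$ there are $m^r$ blocks, and class sizes must be controlled uniformly down to scales where blocks are nearly empty.

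The paper closes exactly this gap with a single non-recursive dyadic assignment. For an interval $(a,b)$ let $j(a,b)$ be the least $j\geq 1$ such that $i/2^j\in(a,b)$ for some integer $i$; this $i$ is unique (else an even $i$ would give a smaller $j$), and the interval is placed in the class $\mathcal{A}(i,j)$. Every interval is assigned somewhere, with no short/long dichotomy; each class is an antichain (all its members contain $i/2^j$); and if $i/2^j<i'/2^{j'}$ then no member of $\mathcal{A}(i',j')$ precedes any member of $\mathcal{A}(i,j)$, so $e(P_n)\geq\prod_{i,j}N(i,j)!$ where $N(i,j)=|\mathcal{A}(i,j)|$. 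The key probabilistic input is that $\mathbb{P}(j(I_k)=j)=2^{-j}$ exactly, so $N_j:=\sum_iN(i,j)$ is binomial with mean $n2^{-j}$; by convexity $\sum_iN(i,j)\log_2N(i,j)\geq N_j\log_2(N_j/2^{j-1})$, and summing over $j\leq\log_2n-\log_2\log_2n$ (Chebyshev plus a union bound, with Markov's inequality to discard the deeper scales) yields $\log_2e(P_n)\geq n\log_2(n/e)-(6+o(1))n$, i.e.\ $e(P_n)\geq n!\,2^{-(6+o(1))n}$ \whp. This anchor-point construction is precisely your multi-scale idea, but it replaces the recursion outright and dissolves all three difficulties above at once; without it (or an equivalent device) your proposal does not yet constitute a proof.
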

\begin{proof}
By Lemma~\ref{lem.fplus-ub1} it will suffice for us to show that $e(P_n) \geq n! \, 2^{-O(n)} \, \whp$; 
and in fact we shall show that
\begin{equation} 
\label{eqn.randint}
e(P_n) \geq n! \, 2^{-(6+o(1))n} \;\; \whp.
\end{equation}
For positive integers $i$ and $j$ with $i$ odd and $i<2^j$, let $I(i, j)$ 
denote the interval $(\frac{i-1}{2^{j}}, \frac{i+1}{2^{j}})$.  
Thus for each $j \geq 1$ there are 
$2^{j-1}$ intervals $I(i, j)$; and 
these intervals partition $[0,1]$, except that they omit the $2^{j-1}+1$ 
end points $\frac{i}{2^{j-1}}$ for 
$0 \leq i \leq 2^{j-1}$.

Let $0 < a < b < 1$ and consider the interval $(a, b)$.
Let $j(a, b)$ be the least integer $j \geq 1$ such that $\frac{i}{2^{j}} \in (a, b)$
for some positive integer $i$. There is a unique such $i$: for if there were
at least two such $i$, then there would be at least one even $i$,
and we could replace $i/2^{j}$ by $(i/2)/2^{j-1}$, giving a smaller value of $j$ 
and contradicting the definition of $j$. Since $i$ is unique, we may call it $i(a, b)$.

We denote the interval $I\bigl(i(a, b), j(a, b)\bigr)$ by $J(a, b)$.
For positive integers $i$ and $j$ with $i$ odd and $i<2^j$,
let $\mathcal{A}(i, j)$ be the random set of all intervals $(a,b)$ amongst $I_1, \ldots, I_n$ 
such that $J(a, b)=I(i, j)$.
Each interval $(a, b)$ in $\mathcal{A}(i, j)$ contains the point $i/2^j$, so
the sets $\mathcal{A}(i, j)$ are antichains.
It follows also that, if the midpoint of $I(i, j)$ (i.e. $i/2^{j})$ is
less than the midpoint of $I(i^{\prime}, j^{\prime})$ (i.e. $i^{\prime}/2^{j^{\prime}}$)
then no interval in $\mathcal{A}(i^{\prime}, j^{\prime})$ can precede
any interval in $\mathcal{A}(i, j)$ in the interval partial order.

Let $N(i,j) = \vert \mathcal{A}(i, j) \vert$, so $\sum_{i,j}N(i,j)=n$.  Then from the above
\[ e(P_n) \geq \prod_{i,j} N(i,j)! \geq \prod_{i,j} (N(i,j)/e)^{N(i,j)} \] 
and so
\[ \log_2 e(P_n) \geq 
\sum_{i, j} N(i,j) \log_2 N(i,j) - n \log_2 e. \]
Here we sum only over $i,j$ such that $N(i,j) \neq 0$.
Fix~$j$, and let $N_j = \sum_i N(i,j)$. Then, since we sum over at most 
$2^{j-1}$ values of $i$, by convexity of $x \log_2 x$
\[ \sum_i  N(i,j) \log_2 N(i,j)  \geq N_j \log_2 (N_j / 2^{j-1}). \]
Hence
\begin{equation} \label{eqn. logeP}
\log_2 e(P_n) \geq 
\sum_{j} N_j \log_2 (N_j / 2^{j-1}) - n \log_2 e.
\end{equation}

Let the random variable $J_k$ be $j(a,b)$ for the $k$th random interval.
Then ${\mathbb P}(J_k=j) = 2^{-j}$ for each $j=1,2,\ldots$.
To see this, condition on the value $x$ of the first end point of the interval 
(which could be the left or the right end point).  If $x \in (\frac{i-1}{2^j}, \frac{i}{2^j})$ 
for some odd $i$, then $J_k=j$ if and only if the second end point falls in
$(\frac{i}{2^j}, \frac{i+1}{2^j})$, which has probability $2^{-j}$.
On the other hand, if $x \in (\frac{i}{2^j}, \frac{i+1}{2^j})$ for some odd $i$, 
then $J_k=j$ if and only if the second end point falls in
$(\frac{i-1}{2^j}, \frac{i}{2^j})$, which also has probability $2^{-j}$.

By the definition of $N_j$, it is the number of intervals $I_k$ such that $J_k=j$.
Then $N_j$ has the binomial distribution with parameters $n$ and $2^{-j}$, 
which has mean ${\mathbb E}(N_j)= n 2^{-j}$ and variance $n 2^{-j}(1-2^{-j}) \leq n 2^{-j}$. 
Hence, by Chebyshev's inequality,
\[ {\mathbb P} ( |N_j - n 2^{-j}| \geq n 2^{-j-1}) 
\leq \frac{n 2^{-j}}{ (n 2^{-j-1})^2} = \frac{4}{n} \, 2^{j}.\]
Let $j_0 := j_0(n) = \log_2 n - h(n)$, 
where $h(n) \to \infty$ slowly as $n \to \infty$: 
we set $h(n) \sim \log_2\log_2 n$. Then, by the union bound,
\begin{eqnarray*}
&&
{ \mathbb P} ( |N_j - n 2^{-j}| \geq n 2^{-j-1} \mbox{ for some } j=1,\ldots, j_0 ) \\
& \leq &
\frac{4}{n} \sum_{j=1}^{j_0} 2^j \; \leq \; \frac{8}{n}\, 2^{j_0} \; = \; 2^{-h(n)+3} \; = \; o(1).
\end{eqnarray*}
Also,
\[ { \mathbb E}( \sum_{j>j_0} N_j ) = n \sum_{j>j_0} 2^{-j} \leq n 2^{-j_0} = 2^{h(n)}.\]
Hence by Markov's inequality, {\whp}
\[ \sum_{j>j_0} N_j \leq h(n) 2^{h(n)} \;\; \mbox{ and so } \;\;
\sum_{j=1}^{j_0} N_j \geq n - h(n) 2^{h(n)}.\]

Let $B_n$ be the event that $|N_j - n 2^{-j}| \leq n 2^{-j-1} \mbox{ for each } j=1,\ldots,j_0$
and $\sum_{j=1}^{j_0} N_j \geq n - h(n) 2^{h(n)}$.  
By the above $ {\mathbb P}(B_n) =1-o(1)$. Condition on $B_n$.  
Then by~\eqref{eqn. logeP}
\begin{eqnarray*}
\log_2 e(P_n) + n \log_2 e
& \geq &
\sum_{j=1}^{j_0} N_j \log_2 (N_j 2^{-j+1})\\
& \geq &
\sum_{j=1}^{j_0} N_j \log_2 (n 2^{-j-1} 2^{-j+1}) \\
& = &
\sum_{j=1}^{j_0} N_j \log_2 n - \sum_{j=1}^{j_0} N_j \cdot 2j.
\end{eqnarray*}
But (still conditioned on the event $B_n$)
\[ 
\sum_{j=1}^{j_0} N_j \cdot 2j \leq (3/2) n \sum_{j=1}^{j_0} 2^{-j} \cdot 2j \leq c n,
\]
where $c= (3/2) \sum_{j \geq 1} 2^{-j} \cdot 2j=6$. Hence
\begin{eqnarray*}
\log_2 e(P_n) & \geq &
\left(\sum_{j=1}^{j_0} N_j \right) \log_2 n  - (6 + \log_2 e) n \\
& \geq &
(n - h(n) 2^{h(n)} ) \log_2 n - (6 + \log_2 e) n \\
& = & n \log_2 (n/e) - (6+o(1)) n.
\end{eqnarray*}
We have now shown that the event $B_n$ happens {\whp}, and when it happens 
we have $e(P_n) \geq n! \, 2^{-(6+o(1))n}$.  Hence~\eqref{eqn.randint} holds, and we are done.
\end{proof}

From Theorem~\ref{thm.randint}, together with the upper bound in part (a) of 
Theorem~\ref{thm.main}, we see that the random interval order has about `as many linear extensions as possible'.  
Another model of random partial order is the random $k$-dimensional 
partial order $R_n$.  To form this we take $k$ independent random linear 
orders $\leq_i$ each uniformly distributed on $[n]$, and set  
$x \preceq y$ in $R_n$ when $x \leq_i y$ for each $i=1,\ldots,k$.  
Brightwell~\cite{brightwell} has given a precise estimate for $e(R_n)$, which shows that 
$ e(R_n)^{1/n} = \Theta(n^{1-\tfrac1{k}})$  {\whp}; 
thus we are meeting numbers of linear extensions spread more widely 
through the range of possible values given in Theorem~\ref{thm.main}.  

It would be interesting to compute estimates on the number of linear 
extensions of other, random or deterministic,
models of partial orders; see for example~\cite{geo} and~\cite{trotter} for two such models.

\section{Concluding remarks}
\label{sec.concl}

We have proved Theorem~\ref{thm.main}, concerning maximum and minimum 
possible numbers of linear extensions, and in the process we have given more detailed information. 
In particular, we saw that if $\delta \to 1$ but not too quickly then
$(f^+(n,\delta)/n!)^{1/n} = \Theta(1\!-\!\delta)$; and if $\delta \to 0$ 
but not too quickly then $f^-(n,\delta)^{1/n} = \Theta(\delta^{-1})$.
What can we say about other rates of convergence? Can we improve the values $c_i(\delta)$? 

Let us focus on $\delta$ of the form $1/k$ for an integer $k \geq 2$.  
From our earlier results, $\comp(\bA(n,k)) \sim \delta \binom{n}{2}$ and 
$e(\bA(n,k)) = n! \, (1-\delta)^n \, 2^{O(\log_{2} n)}$.  Is $\bA(n,k)$ extremal for having many linear extensions? 
Similarly, $\comp(\bC(n,k)) \sim \delta \binom{n}{2}$ and $e(\bC(n,k)) = 
\delta^{-n} \, 2^{O(\log_{2} n)}$; and we may ask if $\bC(n,k)$ is extremal for having few linear extensions.

\begin{conj} \label{conj.plus}
Let $\delta =1/k$ for an integer $k \geq 2$. Then (a)
\[ (f^+(n,\delta)/n!)^{1/n}  \to 1- \delta \;\; \mbox{ as } n \to \infty;\]
and (b), more boldly, each partial order $Q_n$ on $[n]$ with 
$\comp(Q_n) \geq \comp(\bA(n,k))$ satisfies $e(Q_n) \leq e(\bA(n,k))$.
\end{conj}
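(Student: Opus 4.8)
\medskip

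\noindent\emph{A proof strategy.} The plan is to prove the bolder part~(b); part~(a) then follows (up to routine handling of rounding), since~(b) forces $f^+(n,\delta)=e(\bA(n,k))$, and then~(a) is read off from the estimates $\comp(\bA(n,k))\sim\delta\binom n2$ and $e(\bA(n,k))=n!\,(1-\delta)^n2^{O(\log_2 n)}$ recorded above. It is natural to attack~(b) in two stages: first reduce the $e$-maximiser to a \emph{weak order}, that is, to one of the ``staircase of antichains'' posets $A(n_1,\dots,n_m)$ of Section~\ref{models} --- equivalently, a poset whose incomparability graph is a disjoint union of cliques; and then solve the resulting discrete optimisation among weak orders.

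The second stage is the easy one. For a weak order, $e=\prod_i n_i!$ and $\comp=\binom n2-\sum_i\binom{n_i}2$, so, writing $n_1^*,\dots,n_k^*$ for the most balanced $k$-part composition of $n$, the hypothesis $\comp(Q_n)\ge\comp(\bA(n,k))$ becomes $\sum_i n_i^2\le\sum_i(n_i^*)^2$, and we must show $\prod_i n_i!\le\prod_i n_i^*!$ subject to $\sum_i n_i=n$ and $\sum_i n_i^2\le\sum_i(n_i^*)^2$, with the number of parts free. By Stirling this amounts to $\sum_i n_i\log n_i\le\sum_i n_i^*\log n_i^*$; letting $\mu$ be the distribution giving mass $n_i/n$ to each part size $n_i$ and $X$ a sample from it, so that $\mathbb E_\mu[X]=\tfrac1n\sum_i n_i^2$ and $\sum_i n_i\log n_i=n\,\mathbb E_\mu[\log X]$, concavity of $\log$ gives
\[
\sum_i n_i\log n_i \;=\; n\,\mathbb E_\mu[\log X]\;\le\; n\log\mathbb E_\mu[X]\;=\;n\log\!\left(\tfrac1n\sum_i n_i^2\right)\;\le\;n\log\!\left(\tfrac1n\sum_i(n_i^*)^2\right)\;=\;\sum_i n_i^*\log n_i^*,
\]
with equality only when $X$ is constant, which forces exactly $k$ parts of equal size. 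The $O(\cdot)$ corrections from Stirling's formula and from the floors in the $n_i^*$ must be tracked with some care to get the sharp inequality $e(Q_n)\le e(\bA(n,k))$, but that is routine.

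The real obstacle is the first stage. What is available is the theorem of~\cite{fishb} that for a given number of points and comparable pairs an $e$-maximiser may be taken to be a \emph{semiorder}; but semiorders need not be weak orders, so more is needed. One approach is an exchange argument on the non-decreasing sequence $d_1,\dots,d_n$ of principal-downset sizes that encodes a semiorder (with $d_i<i$ and $\sum_i d_i=\comp$): when this sequence is not the step function of a level structure, a local rearrangement ``snapping apart'' two overlapping indifference classes into clean consecutive levels ought not to decrease $e$ while leaving $\sum_i d_i$ unchanged or larger. Alternatively one could look for a compression acting on the incomparability graph $\overline G$ that preserves co-comparability, does not decrease $|E(\overline G)|$ or the volume of $\C(G)$ (equivalently $e$), and whose only fixed points are disjoint unions of cliques. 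A cautionary point is that no such reduction can hold for every value of $\comp(Q_n)$ --- on three points the unique poset with a single comparable pair is $\mathbf 2+\mathbf 1$ (a $2$-chain together with an isolated point), which is not a weak order --- so the argument must genuinely use that $\comp(\bA(n,k))$ is a constant fraction of $\binom n2$.

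A complementary route that sidesteps the semiorder/weak-order step is to sharpen Lemma~\ref{lem.maxub}: its bound $e(Q_n)\le\prod_v(x_v+1)$, with $x_v$ the back-degree of $v$ in $\overline G$, is tight for weak orders when the vertices are processed level by level, so it would suffice to show that some vertex ordering $\pi$ achieves $\prod_v(x_v^\pi+1)\le\prod_i n_i^*!$ whenever $\overline G$ is a co-comparability graph with $|E(\overline G)|\le\sum_i\binom{n_i^*}2$. This is a clean purely graph-theoretic statement, which one would try to prove by induction on $n$ --- deleting a vertex of minimum degree in $\overline G$ --- the difficulty being that the greedy choice need not minimise the product, so a more careful potential-function argument would be needed.
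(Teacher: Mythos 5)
This statement is Conjecture~\ref{conj.plus} of the paper: it appears in the concluding remarks precisely because the authors cannot prove it, so there is no proof to compare yours against, and the only question is whether your argument settles it. It does not. By your own account ``the real obstacle is the first stage,'' namely passing from an arbitrary $e$-maximiser --- or even from the semiorder guaranteed by~\cite{fishb} --- to a weak order $A(n_1,\dots,n_m)$, and for that step you offer only candidate mechanisms: an exchange on the downset-size sequence that ``ought not to decrease $e$,'' a hypothetical compression of $\overline{G}$ whose required properties you do not verify, and a strengthening of Lemma~\ref{lem.maxub} that ``one would try to prove by induction.'' None of these is formulated precisely, let alone carried out, and your own three-point example ($\mathbf{2}+\mathbf{1}$) shows that any such reduction must use the density hypothesis in an essential way that you do not identify. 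Until one of these reductions is actually established, nothing is proved.

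There are also two concrete defects in the parts you do write out. First, the deduction of (a) from (b) fails for $k\geq 3$: the balanced antichain example has $\incomp(\bA(n,k))\sim\tfrac1k\binom n2$ and hence $\comp(\bA(n,k))\sim(1-\tfrac1k)\binom n2$, so part (b) constrains $f^+$ at density $1-\delta$, not at density $\delta=1/k$. Your own Jensen computation confirms this: under the constraint $\sum_i n_i^2\leq\sum_i(n_i^*)^2\sim n^2/k$ it returns the maximum value $\prod_i n_i^*!\approx n!\,k^{-n}=n!\,\delta^n$, i.e.\ the rate $\delta$, not the conjectured $1-\delta$. (You have taken at face value the paper's recorded estimates ``$\comp(\bA(n,k))\sim\delta\binom n2$'' and ``$e(\bA(n,k))=n!\,(1-\delta)^n2^{O(\log_2 n)}$,'' in which $\delta$ and $1-\delta$ appear to be interchanged; for $\delta=1/k$ with $k\geq3$ the Jensen-optimal configuration would need $1/(1-\delta)=k/(k-1)$ equal parts, which is not an integer, so no weak order attains the rate $1-\delta$ and the route from (b) to (a) cannot work as described.) Second, the ``routine'' bookkeeping in the second stage is not routine for the exact inequality $e(Q_n)\leq e(\bA(n,k))$ demanded by (b): the Stirling corrections $\tfrac12\sum_i\log(2\pi n_i)$ scale with the number of parts, which is not bounded a priori, and the equality analysis near the balanced partition needs a genuine argument --- a direct discrete exchange on the multiset $\{n_i\}$ would be safer than Jensen plus Stirling. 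In short, this is a reasonable research plan consistent with the bounds the paper does prove, but it is not a proof of either part of the conjecture.
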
           

\begin{conj} \label{conj.minus}
Let $\delta =1/k$ for an integer $k \geq 2$. Then (a)
\[ f^-(n,\delta)^{1/n}  \to 1/\delta \;\; \mbox{ as } n \to \infty;\]
and (b), more boldly, each partial order $Q_n$ on $[n]$ with 
$\comp(Q_n) \leq \comp(\bC(n,k))$ satisfies $e(Q_n) \geq e(\bC(n,k))$.
\end{conj}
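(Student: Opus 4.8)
Since this is a conjecture and not a theorem, what follows is a strategy, with a flag for where the genuine difficulty lies. First note that part~(b) implies part~(a): given~(b), every $Q_n$ with $\comp(Q_n) \le \comp(\bC(n,k))$ has $e(Q_n) \ge e(\bC(n,k)) = \binom{n}{n_1,\ldots,n_k}$, so $f^-(n,\delta)^{1/n} \ge \binom{n}{n_1,\ldots,n_k}^{1/n} \to k = 1/\delta$ by Stirling's formula, while the matching upper bound $f^-(n,\delta)^{1/n} \le \lceil \delta^{-1}\rceil = k$ is already in hand from~\eqref{eqn.fminusub1}. So the whole content is part~(b): a tight lower bound on $e(Q_n)$.

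I would attack~(b) by passing to the comparability graph $G$ of $Q_n$ and using $e(Q_n) = n!\cdot\mathrm{vol}(\C(G))$, recalling that, $G$ being perfect, $\C(G)$ is the stable-set polytope of $G$, whose vertices are the incidence vectors of the antichains of $Q_n$. The claim becomes: among comparability graphs $G$ on $n$ vertices with at most $m := \comp(\bC(n,k))$ edges, the volume $\mathrm{vol}(\C(G))$ is at least $\mathrm{vol}(\C(G_0)) = \prod_i 1/n_i!$, where $G_0$ is the disjoint union of $k$ balanced cliques; and since adding an edge to a comparability graph only shrinks $\C(G)$ (equivalently, strictly decreases $e$), the difficulty is concentrated at the graphs using the full budget of $m$ edges.

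The easy case is $G$ itself a disjoint union of cliques, of sizes $s_1,\ldots,s_r$: then $\C(G)$ is a product of simplices and $e(Q_n) = n!/\prod_j s_j!$, so one must show $\prod_j s_j! \le \prod_i n_i!$ given $\sum_j s_j = n$ and $\sum_j s_j^2 \le \sum_i n_i^2$. Cauchy--Schwarz forces $r \ge k$ for $n$ large (otherwise $\sum_j s_j^2 \ge n^2/r > \sum_i n_i^2$), and when $r = k$ the quadratic constraint forces the $s_j$ all equal, i.e. $G = G_0$ up to rounding; the cases $r > k$ follow from a short convexity estimate together with Stirling's formula. (Crude bounds such as $\log\prod_j s_j! \le n\log\max_j s_j$ lose a factor $e^{\Omega(n)}$, so some care is needed even here.) Thus among disjoint unions of chains $\bC(n,k)$ is extremal, which reconfirms that the target rate $1/\delta$ is correct and proves part~(b) within that subclass.

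The main obstacle is ruling out comparability graphs with overlapping cliques, that is, posets that are not disjoint unions of chains. Every lower bound on $e(Q_n)$ available to us falls short of the target rate $1/\delta$ by a constant factor, hence by $e^{\Omega(n)}$ in $e(Q_n)$ itself: the hook-length inequality (Lemma~\ref{lem.low}) gives $e(Q_n)^{1/n}$ at least about $2/(e\delta)$, the level inequality~\eqref{eqn.chain2b} only about $\delta^{-1/2}/e$, and the entropy bounds~\eqref{eqn.KK} and~\eqref{eqn.Cardinal} are weaker still. A proof therefore needs a genuinely tight estimate; I would try either (i) a Brunn--Minkowski- or entropy-type inequality for $\mathrm{vol}(\C(G))$ as the edge set of $G$ grows, showing that packing the allowed edges into disjoint balanced cliques shrinks the volume fastest, or (ii) a compression argument turning an arbitrary $Q_n$ into a disjoint union of chains with no more comparabilities and no more linear extensions, after which the easy case finishes. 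I expect this step to require a new idea, which is presumably why it is stated only as a conjecture.
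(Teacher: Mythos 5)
This statement is one of the paper's open conjectures (Conjecture~\ref{conj.minus}); the paper proves neither part, so there is no proof of record to compare yours against, and you are right to offer only a strategy rather than claim a proof. Your situation report is accurate and matches the paper's own discussion: the upper bound $f^-(n,1/k)^{1/n}\le k$ is \eqref{eqn.fminusub1} via $\bC(n,k)$; the best general lower bound available, Lemma~\ref{lem.low}, gives only about $(2/(e\delta))^n$, short of the target $\delta^{-n}$ by a factor $(e/2)^n$; and the height bound \eqref{eqn.chain2b} and the entropy bounds \eqref{eqn.KK} and \eqref{eqn.Cardinal} are weaker still, as Section~\ref{sec.min} records. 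Your ``easy case'' (disjoint unions of chains) is exactly the evidence the paper itself cites for the conjecture, and your diagnosis of the missing ingredient --- a lower-bound technique for general posets that is tight to the constant in the exponent --- is precisely where the problem stands.

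Two soft spots in the parts you do assert. First, the implication (b) $\Rightarrow$ (a) is not immediate: $f^-(n,1/k)$ is a minimum over posets with $\comp(Q_n)\le\frac1k\binom{n}{2}$, whereas (b) only covers $\comp(Q_n)\le\comp(\bC(n,k))$, and by \eqref{eqn.chaincomp} (computed exactly) the latter threshold is smaller by $\Theta(n)$ for $k\ge 2$. This is repairable --- for instance, add $O(k)$ isolated points, which raises the budget past $\frac1k\binom{n}{2}$ while changing $e$ by only a polynomial factor --- but as written (b) does not cover every poset counted by $f^-(n,1/k)$. Second, within the union-of-chains subclass your Cauchy--Schwarz step correctly forces $r\ge k$ and forces balance when $r=k$, but for $r>k$ a ``convexity estimate together with Stirling's formula'' can only yield $\prod_j s_j!\le \prod_i n_i!$ up to subexponential error, which suffices for (a) but not for the exact inequality claimed in (b); note also that the natural local moves cut both ways (merging parts increases both $\prod s_j!$ and $\sum\binom{s_j}{2}$, while balancing two parts decreases both), so the constrained extremal problem needs a finite, non-asymptotic argument even in this easy case. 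Neither point changes your main conclusion: the genuinely hard step, a tight lower bound for posets that are not unions of chains, is not supplied here, and the statement remains open.
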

                       
For partial orders $Q_n$ on $n$ points with a given density of edges in 
the comparability graph $G$, we have bounded the possible number of 
linear extensions, which equals $n!$ times the volume of the clique polytope $\C(G)$. 
It could be interesting to investigate bounds on the volume of 
the clique polytope $\C(G)$ for graphs $G$ from other classes of perfect graphs (or indeed from more general classes).

We have also shown in Theorem~\ref{thm.randint} that a random 
interval partial order $P_n$ \whp\ has $e(P_n) = n! \, 2^{-\Theta(n)}$.  
Can this be pinned down more precisely? Is there a constant $c>0$ 
such that \whp\ $(e(P_n)/ n!)^{1/n} \to c$?       
(If there is such a constant $c$, then $c \geq 2^{-6}$ by the inequality~\eqref{eqn.randint}.)

\subsubsection*{Acknowledgements}

We are grateful to the referees, whose comments have led to a much improved paper, 
and have encouraged us for example to make explicit the Conjectures~\ref{conj.plus} and~\ref{conj.minus}.

\end{document}